\newcommand\blfootnote[1]{%
  \begingroup
  \renewcommand\thefootnote{}\footnote{#1}%
  \addtocounter{footnote}{-1}%
  \endgroup
}
\newcommand{\ZZ}{\mathbb{Z}}
\newcommand{\Aut}{\mathrm{Aut}}
\newcommand{\G}{\Gamma}
\newcommand{\cC}{\mathcal{C}}
\newcommand{\m}{\mathcal{M}}
\newcommand{\cM}{\mathcal{M}}
\newcommand{\cS}{\mathcal{S}}
\newcommand{\Stab}{\mathrm{Stab}}
\newtheorem{theorem}{Theorem}[section]
\newtheorem{proposition}[theorem]{Proposition}
\newtheorem{lemma}[theorem]{Lemma}
\newtheorem{question}[theorem]{Question}
\theoremstyle{definition}
\begin{document}

\begin{center}
{\Large{\textbf{An infinite family of simple graphs underlying chiral, orientable reflexible and non-orientable rotary maps}}} \\ [+4ex]
{\large Isabel Hubard{\small$^{a}$}, Primo\v{z} Poto\v{c}nik{\small$^{b,c,*}$} and Primo\v z \v Sparl{\small$^{c, d, e}$}}
\\ [+2ex]
{\it \small 
$^a$Institute of Mathematics, National Autonomous University of Mexico, 04510 Mexico City, Mexico.\\
$^b$University of Ljubljana, Faculty of Mathematics and Physics, Jadranska 19, SI-1000 Ljubljana, Slovenia.\\
$^c$Institute of Mathematics, Physics and Mechanics, Jadranska 19, SI-1000 Ljubljana, Slovenia.\\ 
$^d$University of Ljubljana, Faculty of Education, Kardeljeva plo\v s\v cad 16, SI-1000 Ljubljana, Slovenia.\\
$^e$University of Primorska, Institute Andrej Maru\v{s}i\v{c}, Muzejski trg 2, SI-6000 Koper, Slovenia.
}
\end{center}

\blfootnote{
Email addresses: 
isahubard@im.unam.mx (Isabel Hubard),
primoz.potocnik@fmf.uni-lj.si (Primo\v z Poto\v cnik),
primoz.sparl@pef.uni-lj.si (Primo\v z \v Sparl)
\\
* - corresponding author
}

\begin{abstract}
In this paper, we provide the first known infinite family of simple graphs, each of which is the skeleton of a chiral map, a skeleton of a reflexible map on an orientable surfaces, as well as
a skeleton of a reflexible map on a non-orientable surface. This family consists of all 
lexicographic product $C_n[mK_1]$, where $m\ge 3$, $n = sm$, with $s$ an integer not divisible by $4$.
This answers a question posed in [S.\ Wilson, Families of regular graphs in regular maps,
{\em Journal of Combinatorial Theory, Series B} 85 (2002), 269--289].
\end{abstract}


\section{Introduction}

A {\em map} is a connected finite graph 
(called the {\em skeleton of the map}),
 viewed as a 1-dimensional CW complex, embedded on a closed surface  
 such that 
 when it is removed from the surface,  the connected components
(called {\em faces}) are all homeomorphic to an open disk.
The edges and vertices of the skeleton are referred to
also as edges and vertices of the map.
The map is called {\em polytopal} provided that the boundary of each face is a cycle in the graph and that every edge lies on the boundary of two distinct faces.
The term ``polytopal'' here comes from the fact that such a map can be considered as an abstract polytope of rank $3$; see for example \cite{ARP}. 
One should also note that polytopal maps are examples of closed 2-cellular embeddings of graphs, in both of the two standard meanings of
this notion (see, for example, the discussion in \cite[Section 1]{EllZha}).
All maps  appearing in this paper will be polytopal and all graphs will be simple.

An automorphism of the skeleton of a map that extends to a homeomorhism of the underlying surface is  called an {\em automorphism of the map} and the set of all such automorphisms forms a group, called the {\em automorphism group of the map}. 
If the automorphism group is rich enough so that the stabiliser of each vertex contains a cyclic group acting transitively on the neighbouring edges (and faces)
and, in addition, so that the stabiliser of each face contains a cyclic group acting transitively on the vertices (and edges) of the face, then the map is called {\em rotary}. 
Note that the automorphism group of a rotary map acts transitively on the vertices, on the faces, on the oriented edges (arcs), as well as on incident vertex-face pairs of the map. 
In particular, all faces in a rotary map have equal co-valence (where by the {\em co-valence of a face} we mean 
 the number of edges lying on the boundary of the face). If a rotary map has co-valence $p$ and the skeleton has valence $q$, then we say that the map
 has {\em type} $\{p,q\}$.

If, in addition, the face-stabiliser in a rotary map also contains
an automorphism that acts as a reflection on the face (which is equivalent to requiring that the stabiliser of an arc in the face stabiliser is non-trivial), then the map is called {\em reflexible}. 
Note that every rotary map on a non-orientable surface is reflexible. 
On the other hand, if the underlying surface is orientable, then rotary maps that are not reflexible exist and are called {\em chiral}. 
Reflexible maps can thus be {\em orientable} (if the underlying surface is orientable) or {\em non-orientable} (if the surface is non-orientable). 

There is a vast literature on different aspects of rotary maps, but typically, the topic is approached from one of the following three points of view:
\begin{itemize}
\setlength{\itemsep}{0pt}
\item Given a fixed (orientable or non-orientable) surface $\mathcal{S}$, classify all rotary maps on $\mathcal{S}$;
\item Given a finite group $G$ (or a family thereof), find all rotary maps whose automorphism group is isomorphic to $G$;
\item Given a finite connected graph $\Gamma$ (or a family thereof), determine all rotary maps whose skeleton is $\Gamma$.
\end{itemize}

This paper falls into the third of the above categories. In particular, we shall address the following question, which was originally posed by Steve Wilson in \cite{wilson2002families}.
\begin{question}
Does there exist a graph which is the skeleton of representatives from all three classes of rotary maps: 
non-orientable reflexible, orientable reflexible, and chiral?
\end{question}

As was observed already in \cite{wilson2002families}, the answer to this question is affirmative since the complete multipartite graph $K_{3,3,3}$ is an example of such a graph. However, the existence of
an infinite family of graphs, each of which underlies maps from all three classes, has not been known until now.
The main purpose of this paper is to exhibit such an infinite family. In particular, we shall prove the following theorem:

\begin{theorem}
\label{the:main}
Let $s$ be an arbitrary positive integer not divisible by $4$, let $m\geq 3$ be an odd integer, let $n = sm$, 
and let $\Gamma$ be the lexicographic product $C_n[mK_1]$.
Then $\Gamma$ is the skeleton of
\begin{itemize}
\item a chiral polytopal map of type $\{mn, 2m\}$ and genus $1+m(n\frac{m-1}{2} - 1)$, and
\item an orientable reflexible polytopal map of type $\{n, 2m\}$ and genus $1+m(n\frac{m-1}{2} - m)$, and
\item a non-orientable reflexible polytopal map  of type $\{2n, 2m\}$ and genus $2+m(n(m-1)-m)$.
\end{itemize}
\end{theorem}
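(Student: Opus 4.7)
The plan is to construct each of the three maps explicitly as an embedding of $\G = C_n[mK_1]$ in an appropriate surface, verify polytopality and the required symmetry type by exhibiting enough automorphisms, and then read off the genera from Euler's formula. I would label $V(\G) = \ZZ_n \times \ZZ_m$, with $(i,j)$ adjacent to $(i',j')$ if and only if $i-i'\equiv \pm 1 \pmod n$. This identifies $\G$ as the Cayley graph of $\ZZ_n\times\ZZ_m$ on the connection set $\{(\pm 1, k) : k\in\ZZ_m\}$, yielding a regular subgroup of $\Aut(\G)$ that will sit inside the automorphism group of each of the three maps. The hypothesis that $m$ is odd is used throughout to ensure that certain permutations of $\ZZ_m$, such as the doubling map $k\mapsto 2k$, are bijections of full order.

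Each map would be specified by (a) a rotation system, i.e.\ a cyclic ordering of the $2m$ edges incident to each vertex, chosen invariantly under the $\ZZ_n\times\ZZ_m$-action, and (b) in the non-orientable case an edge signing $\lambda\colon E(\G)\to\{+1,-1\}$; the faces are then the orbits of the standard face-tracing procedure. For $\cM_1$ of type $\{mn,2m\}$ I would design the rotation so that each face-orbit becomes a Hamiltonian cycle of $\G$, winding $m$ times around the $C_n$ direction, and producing $2m$ faces of length $mn$. For $\cM_2$ of type $\{n,2m\}$ I would group the $m$ forward neighbours together in the natural cyclic order of $\ZZ_m$ and the $m$ backward neighbours together in the reversed order, so that face-tracing produces $2m^2$ faces of length $n$, each projecting onto one revolution around $C_n$. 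Finally, $\cM_3$ is obtained from $\cM_2$ by twisting with a $\ZZ_2$-voltage that flips a carefully chosen subset of the edges, producing a non-orientable embedding whose faces have length $2n$; the arithmetic condition that $s$ is not divisible by $4$ is precisely what makes this twist consistent with the prescribed map symmetries.

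It then remains to verify polytopality (each face a simple cycle, each edge bordering two distinct faces) and rotariness, and to decide which maps are reflexible. Beyond the Cayley action, I would exhibit at a fixed vertex an automorphism of order $2m$ realising the prescribed rotation (constructed from a suitable permutation of the $\ZZ_m$ coordinate, possibly composed with the inversion $i\mapsto -i$ of the $\ZZ_n$ coordinate), together with an automorphism rotating a chosen face by the appropriate cyclic order. For $\cM_2$ and $\cM_3$ the involution $(i,j)\mapsto(-i,j)$, possibly composed with a suitable involution of the $\ZZ_m$ coordinate, reverses a chosen arc and thus realises a reflection of the map. The main technical obstacle is to prove that $\cM_1$ admits no such reflection: I would associate to each Hamiltonian face a winding invariant in $\ZZ_n$ recording its net rotation around $C_n$, and show that any arc-reversing automorphism would have to negate this invariant on every face simultaneously, which under the parity hypotheses on $m$ and $s$ forces a contradiction.

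Once the types are verified, the genera are immediate from Euler's formula. With $|V(\G)|=mn$ and $|E(\G)|=nm^2$, a map of type $\{p,2m\}$ has $F = 2nm^2/p$ faces, giving $F=2m$, $2m^2$, $m^2$ in the three cases. Substituting into $V-E+F=2-2g$ for the two orientable maps and $V-E+F=2-k$ for the non-orientable one recovers the stated values $1+m(n\frac{m-1}{2}-1)$, $1+m(n\frac{m-1}{2}-m)$, and $2+m(n(m-1)-m)$ respectively.
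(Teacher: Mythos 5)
Your Euler-characteristic bookkeeping at the end is correct, and your overall skeleton (construct three highly symmetric embeddings, check polytopality and rotariness, decide reflexibility) is in the same spirit as the paper, but the proposal leaves precisely the hard steps unexecuted, and the one step you do sketch in detail would not work. First, no rotation system is ever written down, and rotariness is never established: for the type $\{mn,2m\}$ map you need an automorphism of $\G$ cyclically rotating a Hamiltonian face together with an automorphism of order $2m$ fixing a vertex and realizing the local rotation, with their product an involution and the face set invariant. The existence of such a compatible pair is the crux of the whole construction; in the paper it is obtained by exhibiting explicit elements $\sigma_1=(c,1,\ldots,1)r$ and $\sigma_2=(t,tc^{\ell_2},\ldots,tc^{\ell_n})z$ of $S_m\wr D_n$ with the quadratic exponent sequence $\ell_i=1+(i-2)(i-1)/2$, verifying $(\sigma_1\sigma_2)^2=1$, computing $|\langle\sigma_1,\sigma_2\rangle|=2m^2n$, and then invoking a general criterion (Theorem~\ref{prop:undergraph}). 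Writing ``I would design the rotation so that the faces are Hamiltonian'' and ``I would exhibit an automorphism rotating a chosen face'' asserts exactly what has to be proved.

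Second, the chirality argument is not sound as sketched. You propose that any arc-reversing map automorphism must negate a winding invariant of every Hamiltonian face, contradicting the parities of $m$ and $s$. But $\Aut(\G)=S_m\wr D_n$ contains the reflection $z$ reversing the $C_n$-direction, so a putative reflection of the map can negate windings harmlessly (and an unoriented facial cycle only carries a winding number up to sign anyway); moreover chirality cannot follow from parity of $m$ and $s$ alone, since the very same graph also supports reflexible rotary maps. Chirality is a property of the specific rotation: the paper proves it by assuming an involution $\rho$ fixing $(1,1)$, normalizing $G$ and inverting $\sigma_1$, writing $\rho=(\alpha_1,\ldots,\alpha_n)z$, deducing $\alpha_2=\cdots=\alpha_n$ and that $\alpha_2$ centralizes $tc$ and $tc^2$, hence $c$, so $\alpha_2\in\langle c\rangle$ and (being an involution with $m$ odd) $\alpha_2=1$, contradicting $\alpha_1c\alpha_2=1$ with $\alpha_1^2=1$. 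Nothing in your outline engages with this level of detail, and the analogous gaps persist for the two reflexible maps: the claimed $\ZZ_2$-twist of $\cM_2$ producing faces of length $2n$, and the precise role of $4\nmid s$, are asserted but never verified, whereas the paper gives separate explicit generator pairs for $s$ odd and $s\equiv 2\pmod 4$ (using Petrie duality in the odd case and a genuinely different construction in the even case, where the non-orientable map is self-Petrie).
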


\section{Rotary maps arising from graphs}
\label{sec:maps}

In this section, we review some definitions pertaining to rotary maps and their automorphisms.
\color{black}

Let $\G$ be the skeleton of a polytopal map $\m$ and assume that $\G$ is a simple graph. 
A cycle  $C$ that constitutes the boundary of a face $f$ is called a {\em facial cycle} of $\m$. 
Since the face of a polytopal map of type $\{p,q\}$, with $q\geq 3$, is uniquely determined by its facial cycles, we shall often think of faces both as parts of the underlying surface as well as the cycles in the skeleton.
By definition of polytopality of maps, the set $\cC$ of all facial cycles has the property that every edge of $\Gamma$ belongs to precisely two cycles in $\cC$. 
Such a set of cycles in a graph is called a {\em cycle double cover} (see \cite{Jamishy}, for example). 
In short, the set of facial cycles in a polytopal map is a cycle double cover of the skeleton.

Let $\cC$ be a cycle double cover of a simple graph $\G$. 
For a vertex $v\in\G$, its {\em vertex figure with respect to $\cC$} is defined as the graph whose vertices are the edges of $\G$ incident to $v$, and
two such edges are adjacent in the vertex figure whenever they are consecutive edges of a cycle in $\cC$.
Note that the valence of a vertex in a vertex figure is at most $2$ (since every edge of $\G$ belongs to exactly two cycles in $\cC$).
Moreover,  if $\cC$ is the set of facial cycles of a polytopal map, then the vertex figure at a vertex $v$ of valence $q$ is in fact isomorphic to the cycle $C_q$ (if $q\ge 3$)
or to $K_2$ (if $q=2$).

%
%
The following lemma, providing the converse of the above observations,
 is a straightforward folklore result and has appeared in similar forms in several publications (see, for example, \cite[Lemma 3.9]{PotVid}). Here
we provide just a brief sketch of the proof.

\begin{lemma}
\label{lemma:polyhedron}
  Let $\G$ be a connected simple graph of minimal valence at least $3$ and let $\cC$ be a cycle double cover of $\G$ such that
 the vertex figure of each vertex of $\G$ with respect to $\cC$ is connected (and thus isomorphic to a cycle).
 Let $\cS$ be the topological space obtained from $\G$ (viewed as a 1-dimensional CW complex) by 
  gluing a copy of a closed disk to each cycle of $\cC$ homeomorphically along the boundary of the disk. Then $\cS$ is a closed surface and
  the embedding of $\G$ onto $\cS$ is a polytopal map $\m(\G,\cC)$ whose set of facial cycles is $\cC$. 
  Moreover, the automorphism group $\Aut(\m(\G,\cC))$ consists of all automorphisms of $\Aut(\G)$ that induce a permutation of the cycles of $\cC$. 
\end{lemma}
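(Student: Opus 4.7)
\bigskip

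\noindent\textbf{Proof proposal.} The plan is to verify, in order, that (i) the quotient space $\cS$ is a topological $2$-manifold, (ii) it is compact and connected, and then (iii) read off the combinatorial structure of the embedding and (iv) identify the automorphism group. The only delicate point will be checking the manifold condition near vertices; the rest is bookkeeping.

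For (i), I would exhibit a disk neighbourhood around each point of $\cS$, splitting into three cases. A point in the interior of a glued disk clearly has a disk neighbourhood. A point in the relative interior of an edge $e$ of $\Gamma$ lies on the boundary of exactly two glued disks, because $\cC$ is a cycle double cover; two half-disks glued along an interval form a disk, so such a point has a disk neighbourhood as well. The crucial case is a vertex $v$ of valence $q\ge 3$: the disks attached to the cycles of $\cC$ passing through $v$ meet at $v$ in "corners,'' one corner per pair $(C,v)$ with $v\in C\in\cC$, and the combinatorial pattern in which these corners are glued along the half-edges at $v$ is encoded precisely by the vertex figure at $v$. The hypothesis that this vertex figure is (connected, hence) a single cycle $C_q$ means the corners can be cyclically arranged around $v$ so that consecutive corners share a half-edge, producing a neighbourhood homeomorphic to an open disk. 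This is exactly the step where the connectedness assumption is indispensable: if the vertex figure split into several cycles, the neighbourhood of $v$ would be a wedge of several disks.

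For (ii), compactness is immediate since $\cS$ is a finite gluing of compact pieces, and connectedness follows from the connectedness of $\G$ together with the fact that each attached disk is connected and meets $\G$. Hence $\cS$ is a closed surface. For (iii), by construction the connected components of $\cS\setminus\Gamma$ are the interiors of the attached disks, so they are open disks, and their boundaries (traversing the attaching cycles) are exactly the elements of $\cC$; since $\cC$ is a cycle double cover of a simple graph of minimal valence at least $3$, every edge lies on exactly two distinct faces and every face boundary is a cycle of $\Gamma$. Thus the embedding is polytopal and its facial cycle set equals $\cC$.

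Finally, for (iv), one direction is immediate: if $\varphi$ is an automorphism of $\m(\G,\cC)$, it restricts to a graph automorphism of $\Gamma$ and, being a homeomorphism of $\cS$, permutes the connected components of $\cS\setminus\Gamma$ and hence their boundary cycles, i.e.\ it permutes $\cC$. Conversely, given $\varphi\in\Aut(\Gamma)$ that permutes $\cC$, I would extend $\varphi$ to a homeomorphism of $\cS$ cycle by cycle: for each $C\in\cC$, the map $\varphi$ sends the attaching cycle of the disk $D_C$ homeomorphically to the attaching cycle of $D_{\varphi(C)}$, and any homeomorphism between the boundary circles of two closed disks extends to a homeomorphism of the disks (matching the chosen orientations coherently). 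Pasting these extensions yields a homeomorphism of $\cS$ restricting to $\varphi$ on $\Gamma$, completing the characterisation of $\Aut(\m(\G,\cC))$.

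The only real obstacle is the vertex-case argument in (i); once the connectedness of each vertex figure is translated into the cyclic arrangement of corners around a vertex, the rest of the proof is formal.
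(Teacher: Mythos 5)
Your proposal is correct and follows essentially the same route as the paper's own (sketched) argument: a case analysis of regular neighbourhoods at disk-interior points, edge-interior points, and vertices (where the connected vertex figure gives the cyclic arrangement of corners), followed by reading off polytopality and extending any cycle-set-preserving graph automorphism disk by disk via the fact that a homeomorphism of boundary circles extends to the closed disks. Your write-up is in fact somewhat more complete than the paper's sketch (you also record compactness, connectedness, and the easy converse inclusion for the automorphism group), and the parenthetical about matching orientations is unnecessary but harmless.
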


The proof of the above lemma can be sketched as follows. First, observe that every point of $\cS$ which is an internal point of one of the
closed disks glued to $\G$ has a regular neighbourhood (that is, homeomorphic to an open disk). Similarly, a regular neighbourhood of an internal point of an edge exists due to the fact that every edge of $\G$ belongs to exactly two cycles in $\cC$ (and is thus identified with a point in the boundary of exactly two disks that were
glued to $\G$). 
Finally, since the minimal valence of $\G$ is at least $3$ and since every edge belongs to two cycles in $\cC$, the vertex figure at each vertex (being connected) is a cycle. 
This then enables one to find a regular neighbourhood of each point of $\cS$ that corresponds to a vertex of $\G$. 
In short, $\cS$ is a closed surface with
$\G$ embedded into it. 
Since the boundary of each face is a cycle of $\G$ and since every edge lies on the boundaries of two distinct faces, the corresponding map
is polytopal. 
Finally, a simple fact that every homeomorphism between the boundaries of two closed disks can be extended to a homeomorphism between the disks
 implies that every automorphism of $\G$ that preserves the set of facial cycles $\cC$ extends to a homeomorphism of $\cS$. 
With this we finish the sketch of the proof of Lemma~\ref{lemma:polyhedron}.

A {\em flag} of a polytopal map is an incident vertex-edge-face triple.
Observe that the automorphism group of a polytopal map acts naturally on the set of flags of the map.
In particular, the connectedness implies that the automorphism group is semi-regular (or free) on the flags. 
Thus,  the order of the automorphism group divides the number of flags and is at most twice the number of arcs and at most four times the number of edges. 

Given a polytopal rotary map $\cM$ and a flag  $\Phi=(v,e,f)$ of $\cM$,  there exist automorphisms $\sigma_1$ and $\sigma_2$, acting as a 1-step rotation of the face $f$ and around the vertex $v$, respectively. These automorphisms can be chosen in such a way that $\sigma_1\sigma_2$ is an involution reversing the edge $e$.
The group $\langle \sigma_1, \sigma_2 \rangle \leq \Aut(\m)$ is denoted by $\Aut^+(\m)$, and is often called the {\em rotational group} of $\m$;
the automorphisms $\sigma_1$ and $\sigma_2$ are  then called the {\em distinguished generators of $\Aut^+(\m)$ with respect to the base flag $\Phi$}.
The rotational group of a rotary map $\m$ has index at most $2$ in $\Aut(\m)$; in fact, the index of $\Aut^+(\m)$ in $\Aut(\m)$ is $2$ whenever $\m$ is reflexible and orientable, otherwise $\Aut^+(\m)=\Aut(\m)$.

It is not difficult to see that whenever $\m$ is reflexible there exists an involutory automorphism $\rho$ of the map fixing both $v$ and $f$, and interchanging the two edges of $f$ incident to $v$;
in this case, $\Aut(\m)$ acts transitively on the flags of $\m$.
Moreover, $\langle \rho, \sigma_1 \rangle$ (resp. $\langle \rho, \sigma_2 \rangle$) is a dihedral group: the full automorphism group of the face $f$ (resp.\ of the vertex figure of $v$). Thus, conjugation by $\rho$ sends $\sigma_i$ to $\sigma_i^{-1}$, for $i=1,2$;
we shall say in such a case that $\rho$ {\em inverts} $\sigma_i$.
Recall that if $\m$ is non-orientable reflexible, then $\Aut^+(\m)=\Aut(\m)$ and consequently, the involutory automorphism $\rho$ belongs to $\Aut^+(\m)$.
In contrast, if $\m$ is orientable reflexible, then $\rho \notin \Aut^+(\m)$, and $\Aut(\m)=\langle \sigma_1, \sigma_2, \rho\rangle$.

With the discussion above, the following lemma is straightforward.

\begin{lemma}\label{chiralorreflexible}
    Let $\m$ be a rotary polytopal map of type $\{p,q\}$ with the skeleton $\G$. Let $\sigma_1$ and $\sigma_2$ be the distinguished generators of $G=\Aut^+(\m)$ with respect to some base flag $\Phi$. Let $v$ be the vertex of $\Phi$.
    Then  $(\sigma_1\sigma_2)^2=1$, $\sigma_1$ has order $p$ and maps $v$ to a neighbour $w$ of $v$, while $\sigma_2$ has order $q$, fixes $v$ and acts transitively on its neighbours.
    Moreover,
    \begin{itemize}
        \item $\m$ is non-orientable (and thus reflexible) if and only if there exists an involution $\rho \in G$  fixing $v$ and inverting $\sigma_1$ and $\sigma_2$. 
        In this case $|\Stab_G(vw)|=2$.
        \item $\m$ is reflexible and orientable if and only if there exists an involution $\rho \in\Aut(\m)\setminus G$  fixing $v$ and inverting $\sigma_1$ and $\sigma_2$.  In this case $|\Stab_G(vw)|=1$.
        \item  $\m$ is chiral if and only if there exists no involution $\rho\in\Aut(\m)$ fixing $v$ and inverting $\sigma_1$ and $\sigma_2$. In this case, $|\Stab_G(vw)|=1$.
    \end{itemize}
\end{lemma}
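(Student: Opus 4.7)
The opening assertion of the lemma, concerning the orders of $\sigma_1$ and $\sigma_2$, their actions on $v$ and its neighbours, and the relation $(\sigma_1\sigma_2)^2=1$, is merely a restatement of the definition of the distinguished generators recalled in the paragraphs just above: $\sigma_1$ is the $1$-step rotation around the face $f$ of $\Phi$ (so has order $p$ and sends $v$ to a neighbour $w$), $\sigma_2$ is the $1$-step rotation around $v$ (so has order $q$ and acts transitively on the neighbours of $v$), and $\sigma_1\sigma_2$ is by construction the involution reversing the edge $e$ of $\Phi$.

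The forward direction in each of the three bullets is a direct consequence of the preceding discussion: if $\m$ is reflexible, then there exists an involution $\rho\in\Aut(\m)$ fixing both $v$ and $f$ and inverting $\sigma_1,\sigma_2$ by conjugation; by the facts recalled just before the lemma, $\rho\in G=\Aut^+(\m)$ precisely when $\m$ is non-orientable, whereas $\rho\in\Aut(\m)\setminus G$ (an index-$2$ extension) precisely when $\m$ is orientable reflexible, and no such $\rho$ exists at all when $\m$ is chiral. For the converses, I would assume that an involution $\rho\in\Aut(\m)$ fixes $v$ and inverts both $\sigma_1$ and $\sigma_2$. The identity $\rho\sigma_1\rho=\sigma_1^{-1}$ forces $\rho$ to swap $w=\sigma_1(v)$ with $\sigma_1^{-1}(v)$, while $\rho\sigma_2\rho=\sigma_2^{-1}$ together with $\rho(v)=v$ shows that $\rho$ acts on the arcs out of $v$ as a reflection of the cyclic $\sigma_2$-orbit. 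A short check using polytopality (at most two faces share any edge of $\G$, and the $\sigma_1$-orbit of $v$ traces the boundary of $f$) then forces $\rho(f)=f$, so $\rho$ is a reflection of $f$, witnessing reflexibility of $\m$; orientability is then decided by whether $\rho$ lies in $G$ or not.

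For the arc-stabiliser counts, I would use that $\Aut(\m)$ acts semi-regularly on the flags of $\m$ and that $G$ is transitive on the arcs of $\G$. In the orientable reflexible and chiral cases $|G|$ equals twice the number of edges of $\G$, so $G$ is regular on arcs and hence $|\Stab_G(vw)|=1$; in the non-orientable reflexible case $G=\Aut(\m)$ is flag-regular, so the stabiliser of an arc in $G$ has order $2$ (one such nontrivial element being $\rho\sigma_1\sigma_2$).

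The step I expect to require the most care is the one that is not already explicit in the preceding discussion: verifying that an involution $\rho$ fixing $v$ and inverting $\sigma_1,\sigma_2$ must in fact preserve the face $f$. Once that is in hand, every other claim of the lemma is a direct transcription of the preceding material together with an orbit-stabiliser count.
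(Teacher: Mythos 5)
The paper itself offers no written proof of this lemma (it is declared straightforward from the preceding discussion), and your outline does follow the intended route: the opening claims are definitional, the forward implications in the three bullets come from the discussion of $\rho$ and of the index of $\Aut^+(\m)$ in $\Aut(\m)$, and the converses reduce to checking that an involution fixing $v$ and inverting $\sigma_1$ and $\sigma_2$ preserves the base face (its vertices form the $\langle\sigma_1\rangle$-orbit of $v$, which such a $\rho$ reverses) and hence acts on it as a reflection. Two points, however, need repair.

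First, your justification of $|\Stab_G(vw)|=1$ in the chiral case is circular. You assert that in the chiral case $|G|$ equals twice the number of edges, i.e.\ that $G$ is arc-regular; but since arc-transitivity is already in hand, this is precisely the claim $|\Stab_G(vw)|=1$ you are supposed to prove (in the chiral case $\Aut(\m)=G$ is not flag-transitive, so semi-regularity on flags only gives $|G|\le 4|E(\G)|$, not $|G|=2|E(\G)|$). A genuine argument is needed: a nontrivial $\tau\in\Stab_G(vw)$ cannot fix either of the two faces on the edge $\{v,w\}$ (else it would fix a flag and be trivial), so it swaps them; on the orientable surface underlying a chiral map this makes $\tau$ orientation-reversing, contradicting $\tau\in\Aut^+(\m)$. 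Alternatively, $|\Stab_G(vw)|=2$ would force $|G|=4|E(\G)|$, the number of flags, and a free action of a group of that order on the flags is flag-regular; flag-transitivity then yields a reflection of a face, contradicting chirality. Second, a smaller slip: the nontrivial element of $\Stab_G(vw)$ in the non-orientable case is not $\rho\sigma_1\sigma_2$ (in the paper's right-action convention this element sends $v$ to $v\sigma_1^{-1}$), but $\sigma_2\rho$, exactly as computed in the proof of Theorem~\ref{prop:undergraph}; the order-$2$ count itself is fine, since there $G=\Aut(\m)$ is flag-regular of order $4|E(\G)|$ and arc-transitive.
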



The following result, which can be thought of as a converse of the previous lemma, characterises graphs that can be embedded as skeletons of rotary maps. 
Versions of this result can be found in \cite{gnss} and \cite{surowski}. 
However, additional information that we provide here is somewhat difficult to deduce from the previous results. For this reason, we decided to provide an independent proof.

\begin{theorem}
\label{prop:undergraph}
    Let $q\ge 3$ be an integer, let $\Gamma$ be a $q$-valent connected simple graph, let $v$ be a vertex of $\Gamma$ and let $\sigma_1$ and $\sigma_2$ be automorphisms of $\G$ satisfying the following conditions:
   \begin{enumerate}
     \item[{\rm (i)}] $\sigma_2$  fixes $v$ and the group $\langle \sigma_2 \rangle$ transitively permutes the $q$ neighbours of $v$;
     \item[{\rm (ii)}] $\sigma_1$ maps $v$ to a neighbour $w$ of $v$;
     \item[{\rm (iii)}] $(\sigma_1\sigma_2)^2=1$. 
   \end{enumerate} 
 Let $G = \langle \sigma_1 ,\sigma_2\rangle$ and let $p$ be the order of $\sigma_1$. Then $p \geq 3$, the group $G$ acts transitively on the arcs of $\G$ and the following holds:
    \begin{enumerate}
        \item[{\rm (a)}] If $|\mathrm{Stab}_G(vw)| =1$, then $\G$ is the skeleton of a polytopal rotary map $\m$ of type $\{p,q\}$
        on an orientable surface with $\Aut^+(\m)=G$.
           In this case, $\m$ is reflexible if and only if there exists an involutory automorphism $\rho$ of $\G$ normalising $G$, inverting $\sigma_1$ and fixing $v$.
        \item[{\rm (b)}] If  $|\mathrm{Stab}_G(vw)| =2$ and there exists $\rho\in G$ such that $v\rho=v$ and $\sigma_1^\rho=\sigma_1^{-1}$,
        then $\G$ is the skeleton of a polytopal non-orientable reflexible map $\m$  of type $\{p,q\}$ with $\Aut(\m) = \Aut^+(\m)=G$.
    \end{enumerate}
   In both cases, the set of the facial cycles of $\m$ is the $G$-orbit of the cycle $f$ given by the sequence of vertices $(v, v\sigma_1, v\sigma_1^2, \ldots , v\sigma_1^p)$, and $\sigma_1$, $\sigma_2$ are the distinguished generators of $\Aut^+(\m)$ with respect to the flag $(v, e, f)$, where $e$ is the edge with vertices $v$ and $v\sigma_1^{-1}$.
   Moreover, whenever $\cM$ is reflexible, conjugation by $\rho$ (as given above) also inverts $\sigma_2$.
\end{theorem}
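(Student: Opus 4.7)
The plan is to apply Lemma~\ref{lemma:polyhedron} with the cycle double cover chosen canonically from the group action. Concretely, set $f = (v, v\sigma_1, v\sigma_1^2, \ldots, v\sigma_1^{p-1})$ and $\mathcal{C} = \{fg : g \in G\}$. I would verify in order: (i) $p \geq 3$ and $G$ is arc-transitive; (ii) $f$ is a simple $p$-cycle and $\mathcal{C}$ is a cycle double cover of $\Gamma$; (iii) the vertex figure at $v$ with respect to $\mathcal{C}$ is a single $q$-cycle. Lemma~\ref{lemma:polyhedron} then produces a polytopal map $\mathcal{M} = \mathcal{M}(\Gamma, \mathcal{C})$, and what remains is to identify $\sigma_1,\sigma_2$ as distinguished generators of $\mathrm{Aut}^+(\mathcal{M})$ and to use Lemma~\ref{chiralorreflexible} to separate the orientable and non-orientable regimes.

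The preliminaries are quick. The case $p=1$ is absurd since $v\sigma_1 = w \neq v$, and in the case $p=2$ the relation $(\sigma_1\sigma_2)^2=1$ rearranges to $\sigma_1\sigma_2 = \sigma_2^{-1}\sigma_1$, yielding $w\sigma_2 = v\sigma_1\sigma_2 = v\sigma_2^{-1}\sigma_1 = v\sigma_1 = w$, which contradicts the transitivity of $\langle\sigma_2\rangle$ on the $q \geq 3$ neighbours of $v$. So $p \geq 3$. For arc-transitivity: $\langle\sigma_2\rangle$ is transitive on the arcs leaving $v$, each $\sigma_1\sigma_2^k$ sends $v$ to the neighbour $w\sigma_2^k$, and connectedness of $\Gamma$ propagates this to vertex-transitivity of $G$; combining the two yields arc-transitivity.

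The heart of the argument is showing $f$ is a simple cycle. If $v\sigma_1^r = v$ for some $0 < r < p$, a direct computation gives $w\sigma_1^r = v\sigma_1^{r+1} = v\sigma_1 = w$, so $\sigma_1^r \in \mathrm{Stab}_G(vw)$. In case (a) this is immediately impossible, while in case (b) it forces $\sigma_1^r$ to be the unique involution $h$ in the stabiliser, giving $r = p/2$. This is the main technical obstacle, and I would handle it by using $\rho$: because $\rho$ inverts $\sigma_1$ and $h$ commutes with $\sigma_1$, one has $\rho \neq h$; computing $\rho h \in \mathrm{Stab}_G(v)$ and its action on the neighbour $w$, together with the identity $v\sigma_1^{-1} = w\sigma_2$ obtained by applying $\sigma_2\sigma_1\sigma_2 = \sigma_1^{-1}$ to $v$, yields an element of $\mathrm{Stab}_G(vw)$ distinct from both $1$ and $h$, contradicting $|\mathrm{Stab}_G(vw)| = 2$. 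Once $f$ is a simple $p$-cycle, the double-cover property follows from a standard orbit count (in case (a) the $G$-stabiliser of $f$ as an unoriented cycle is the cyclic group $\langle\sigma_1\rangle$ of order $p$, while in case (b) it is the dihedral $\langle\sigma_1,\rho\rangle$ of order $2p$, and in both situations the edge-face incidence counts balance to $2$). For the vertex figure at $v$, each $h \in \mathrm{Stab}_G(v)$ contributes the adjacency $\{v\sigma_1^{-1}h, wh\}$, and substituting $v\sigma_1^{-1} = w\sigma_2$ shows this is precisely the pair $\{w\sigma_2^{k}, w\sigma_2^{k+1}\}$ for some $k$; as $h$ ranges over $\mathrm{Stab}_G(v)$ these are exactly the edges of the $q$-cycle on $N(v)$ cyclically ordered by $\sigma_2$.

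With Lemma~\ref{lemma:polyhedron} applied, $\mathcal{M}$ is a polytopal map of type $\{p,q\}$ and $G \leq \mathrm{Aut}^+(\mathcal{M})$ since $G$ preserves $\mathcal{C}$; comparing with the flag count gives equality. The relation $(\sigma_1\sigma_2)^2 = 1$ together with the roles of $\sigma_1$ (rotating $f$) and $\sigma_2$ (rotating the vertex figure at $v$) identifies them as distinguished generators with respect to the flag $(v,e,f)$ where $e = \{v, v\sigma_1^{-1}\}$. Lemma~\ref{chiralorreflexible} then distinguishes the two cases: case (a) with $|\mathrm{Stab}_G(vw)| = 1$ yields an orientable rotary map, and case (b) with $|\mathrm{Stab}_G(vw)|=2$ yields a non-orientable reflexible one. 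For the reflexibility criterion in case (a) and the final ``moreover'' clause, I would argue geometrically: any $\rho$ fixing $v$ and inverting $\sigma_1$ satisfies $w\rho = v\sigma_1^\rho = v\sigma_1^{-1}$, so it swaps the two edges of $f$ meeting at $v$ and therefore acts as a reflection of the $q$-cycle that is the vertex figure at $v$; since the action of $\mathrm{Stab}_G(v)$ on $N(v)$ is faithful (the kernel is contained in $\mathrm{Stab}_G(vw)$ and is killed by the presence of a reflection), conjugation by $\rho$ sends the cyclic generator $\sigma_2$ of this rotational action to its inverse, as required.
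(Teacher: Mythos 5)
Your overall strategy is the same as the paper's: take $f=(v,v\sigma_1,\ldots)$, let $\cC$ be its $G$-orbit, verify the double-cover and vertex-figure conditions, apply Lemma~\ref{lemma:polyhedron}, and then sort out orientability via Lemma~\ref{chiralorreflexible}. The preliminary steps ($p\ge 3$, arc-transitivity, the simple-cycle argument including your variant for case (b), and the vertex-figure cycle) are fine. The genuine gap is at the double-cover step. You reduce it to the assertions that $\Stab_G(f)=\langle\sigma_1\rangle$ in case (a) and $\Stab_G(f)=\langle\sigma_1,\rho\rangle$ of order $2p$ in case (b), and these assertions are exactly the non-trivial content: if some element of $G$ acted on $f$ as a reflection (case (a)), or if in case (b) the non-trivial arc-fixing element fixed $f$ pointwise so that $|\Stab_G(f)|=4p$, your incidence count would balance to $1$, not $2$, and the construction would not yield a closed surface. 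Ruling this out is where the hypotheses $q\ge 3$ and the transitivity of $\langle\sigma_2\rangle$ must actually be used: since any reflection of $f$ in $G$ can be pushed (by a power of $\sigma_1$) to one flipping the edge $e'=\{v,w\}$, hence lying in $\Stab_G(e')$, the issue reduces to showing $f\sigma_2\sigma_1\neq f$; if $f\sigma_2\sigma_1=f$ then $f\sigma_2=f$, forcing $w\sigma_2^{-1}=u=w\sigma_2$ and contradicting transitivity on $q\ge3$ neighbours. This is precisely the computation the paper carries out (together with the identification $\Stab_G(e')=\langle\sigma_2\sigma_1\rangle$, resp.\ $\langle\sigma_2\sigma_1,\rho\sigma_1\rangle$), and it is absent from your outline; without it the stabiliser claims are unsupported at the only point where the argument could actually fail.

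A secondary weak point is the final ``moreover'' clause. Your faithfulness argument works in case (a), where $\Stab_G(vw)=1$ makes the action of $\Stab_G(v)$ on $N(v)$ faithful, but in case (b) the kernel of that action is only known to lie in the order-two group $\Stab_G(vw)$, so from ``$\rho$ reflects the vertex figure'' you only get $\sigma_2^\rho\sigma_2\in\Stab_G(vw)$, and the parenthetical ``killed by the presence of a reflection'' is not an argument; note also that in case (b) $\rho$ is not assumed to be an involution, so $\rho^2=1$ needs proof as well. The paper's route avoids this: once the map exists, $H=\langle\sigma_1,\sigma_2,\rho\rangle\le\Aut(\m)$ and semiregularity of $\Aut(\m)$ on flags forces $|\Stab_H(vw)|\le 2$, so the non-trivial element $\sigma_2\rho$ is an involution and $\rho^2=1$, whence $\sigma_2^\rho=\sigma_2^{-1}$. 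You should also make explicit that your $\rho$ preserves $\cC$ (it fixes $f$ and normalises $G$) before speaking of its action on the vertex figure. With these points repaired your proof would essentially coincide with the paper's.
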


\begin{proof}
For convenience, set $u=v{\sigma_1^{-1}}$.
Let us now deduce some consequences of the conditions given in the theorem. 
First, notice that since $\sigma_1\sigma_2$ is an involution, it follows that $\sigma_1\sigma_2 = \sigma_2^{-1}\sigma_1^{-1}$, and thus 
$$
	w{\sigma_2} = v{\sigma_1\sigma_2} = v{\sigma_2^{-1}\sigma_1^{-1}} =v{\sigma_1^{-1}} = u.
$$

Since $\langle\sigma_2\rangle$ transitively permutes the $q$ neighbours of $v$ and $q \geq 3$, this implies that $u\not = w$. Consequently,
the orbit of $v$ under $\langle\sigma_1\rangle$ consists of at least three distinct vertices, implying that $p\ge 3$. Moreover,
since $\sigma_2\sigma_1\sigma_2 = \sigma_1^{-1}$, the fact that $\sigma_2$ is not an involution implies that $\sigma_2$ does not invert $\sigma_1$.

Next, since $\sigma_2\sigma_1$ also is an involution and $v(\sigma_2\sigma_1) = v\sigma_1 = w$, we see that $\sigma_2\sigma_1$ inverts the edge $\{v,w\}$.
This shows that the arc-stabiliser $\Stab_G(vw)$ has index $2$ in the edge-stabiliser $\Stab_G(e')$ where $e' = \{v,w\}$. 
Hence, if $|\mathrm{Stab}_G(vw)| =1$, then $\Stab_G(e') = \langle \sigma_2\sigma_1\rangle$.
 Observe also that the connectedness of $\Gamma$ and the fact that $\Stab_G(v)$ is transitive on the neighbourhood of $v$, together with
 the existence of an edge-reversing automorphism $\sigma_2\sigma_1$, implies that $G$ is transitive on the arcs of $\Gamma$.

Now suppose that there exists  $\rho\in \Aut(\Gamma)$ such that $v\rho=v$ and $\sigma_1^\rho=\sigma_1^{-1}$.
Let $H=\langle \sigma_1, \sigma_2, \rho \rangle$  (where possibly $H=G$). Note that $v(\sigma_2\rho) = v = v\rho^2$. Moreover, since $\rho$ inverts $\sigma_1$, it follows that $\sigma_1^{\rho^2} = \sigma_1$ and $\sigma_1^{\rho^{-1}} = \sigma_1^{-1}$, implying that
$$
w(\sigma_2\rho) = (w\sigma_2)\rho = u\rho = v\sigma_1^{-1}\rho = v\rho\sigma_1=v\sigma_1 = w\quad \text{and}\quad w\rho^2 = v\sigma_1\rho^2 = v\rho^2\sigma_1 = v\sigma_1 =  w.
$$
Therefore, $\sigma_2\rho, \rho^2 \in \Stab_H(vw)$. Since both $\rho$ and $\rho^{-1}$ invert $\sigma_1$ but $\sigma_2$ does not, we see that $\sigma_2\rho  \not = \rho^2$ and $\sigma_2\rho  \not = 1$.


If the hypothesis of  (b) holds, then $\rho\in G$ and thus $H=G$. Moreover, $\sigma_2\rho$ is the unique non-trivial element of $\mathrm{Stab}_G(vw)$
and therefore $\rho^2=1$, showing that $\Stab_G(e') = \langle \sigma_2\sigma_1, \sigma_2\rho\rangle 
= \langle \sigma_2\sigma_1, \rho\sigma_1\rangle$.
 We shall use these facts throughout the rest of the proof whenever we assume that the hypothesis of (b) holds.

Let $v_i = v\sigma_1^i$ for $i\in \ZZ$ and consider the sequence of vertices $f=(v_0, v_1, \ldots, v_{p-1}, v_p)$. 
Clearly, $v_0=v=v_p$,
$w=v_1$ and $u=v_{p-1}$.
Since $w$ is adjacent to $v$ by assumption, the above sequence is a closed walk in $\Gamma$. Let us now show that under the hypothesis of (a) or (b),
the walk $f$ is in fact a cycle.
If this were not the case, then $v=v\sigma_1^k$ for some $k\in \{1,\ldots, p-1\}$ and thus  $\sigma_1^k \in \Stab_G(vw)$. 
Since $\sigma_1$ is of order $p$, only the hypothesis of (b) can hold, in which case $\sigma_1^k$ is the unique non-trivial element of  $\Stab_G(vw)$ and thus $\sigma_1^k = \sigma_2\rho$.
But then $\sigma_2 = \sigma_1^{k}\rho^{-1}$, contradicting the fact that $\sigma_2$ does not invert $\sigma_1$ while $\rho^{-1}$ does. This shows that $f$ is indeed a cycle of $\G$.

Let us now consider the action of $\rho$ on the vertices of $f$, where $\rho \in \Aut(\G)$, $v\rho = v$ and $\sigma_1^\rho = \sigma_1^{-1}$, as in (a) and (b). Let $i\in \{0, \ldots, p-1\}$. Then $v_i\rho = v\sigma_1^i\rho = 
v\rho\sigma_1^{-i} = v\sigma_1^{-i} = v_{p-i}$. In short, $\rho$ preserves $f$ and acts on it as a reflection through $v$.

Now let $\cC =  \{f\gamma \mid \gamma \in G\}$ be the orbit of the cycle $f$ under $G$. We shall show that $\cC$ is a double cycle cover of $\Gamma$
and that the vertex figure with respect to $\cC$ at every vertex is connected. Since $G$ is edge- and vertex-transitive and $\cC$ is an orbit under the action of  $G$,
it suffices to show that the edge $e' = \{v,w\}$ belongs to exactly two cycles in $\cC$ and that the vertex-figure at $v$ is connected.

Suppose that the edge $e'$ lies on a cycle $f' \in \cC$. 
Choose $\alpha\in G$ such that $f' \alpha = f$. 
Since $\langle \sigma_1\rangle$
is transitive on the edges of $f$, we see that for an appropriate $i\in \ZZ_p$ the element $\gamma = \alpha\sigma_1^i \in G$ maps $f'$ to $f$ and preserves $e'$.
That is, $f'\gamma=f$ and $\gamma\in \Stab_G(e')$.
Now recall that $\Stab_G(e')$ is either $\langle \sigma_2\sigma_1\rangle$ (in case that the hypothesis of (a) holds) or $\langle \sigma_2\sigma_1, \rho\sigma_1\rangle$ (in the case of (b)).
Since both $\rho$ and $\sigma_1$ preserve $f$ and since $\gamma \in \Stab_G(e')$, this implies that $f' \in \{f,f\sigma_2\sigma_1\}$. In particular,
$e'$ belongs to at most two cycles in $\cC$, namely $f$ and $f\sigma_2\sigma_1$, and it belongs to exactly two cycles in $\cC$ unless $f=f\sigma_2\sigma_1$.
If the latter happens, then $f = f\sigma_2\sigma_1 = f\sigma_1^{-1} \sigma_2^{-1} = f\sigma_2^{-1}$, implying that $(u,v,w)\sigma_2^{-1} = (w,v,w\sigma_2^{-1})$
 is a path of length $2$ of $f$ centred at $v$. But then $w\sigma_2^{-1} = u = w\sigma_2$, contradicting the assumption that $\langle \sigma_2\rangle$ transitively permutes the $q$ neighbours of $v$ and that $q \geq 3$. 
 This shows that $e'$ lies in precisely two cycles in $\cC$, and thus, that $\cC$ is a cycle double cover of $\Gamma$.
 
Now consider the vertex figure at $v$ with respect to $\cC$. For $i\in \ZZ_q$, let $z_i = w\sigma_2^i$ and observe that
 $\{z_i \mid i \in \ZZ_q\}$ is the neighbourhood of $v$ in $\Gamma$. Moreover,  the edges
 $\{z_{0},v\} = \{w,v\}$ and $\{z_1,v\} = \{u,v\}$ are two consecutive edges on a cycle in $\cC$, implying that they are adjacent in the vertex figure at $v$.
 But then the edges $\{w,v\}\sigma_2^i = \{z_i, v\}$ and $\{u,v\}\sigma_2^i = \{z_{i+1},v\}$ are also adjacent in the vertex figure, implying that the vertex figure
 is connected. 
 
 We are now in a position to apply Lemma~\ref{lemma:polyhedron} to conclude that the graph $\G$ is the skeleton of a map $\m$ with $\Aut^+(\m)=G$.
The existence of $\sigma_1$ and $\sigma_2$ implies that the map $\m$ is rotary. 
Now recall that whenever $\rho$ exists (as in (a) or (b)), it preserves $f$ and normalizes $G$, implying that it preserves $\mathcal{C}$ and is thus an automorphism of the map $\m$. Moreover, the automorphism $\sigma_2\rho$ is a non-trivial element of the group $\Stab_{H}(vw)$ where $H = \langle \sigma_1,\sigma_2,\rho\rangle \le \Aut(\m)$. In particular, $\sigma_2\rho$ is an involution (and so is $\rho$), implying that $\sigma_2^{\rho} =  \sigma_2^{-1}$. 
The rest of the claims of the theorem now follow directly from Lemma~\ref{chiralorreflexible}.
%
\end{proof}

We conclude the section with a lemma that, given a polytopal reflexible map $\m$, allows us to obtain yet another map with the same skeleton,  often referred to as the Petrie dual of $\m$ (see for example \cite{coxeter2013generators}).

\begin{lemma}
\label{lem:Petrie}
Let $\m$ be a polytopal reflexible map of type $\{p,q\}$ with $q\geq 3$. 
Suppose that the skeleton $\Gamma$ of $\m$ is simple. 
Let $\sigma_1, \sigma_2$ be the distinguished generators of $\Aut^+(\m)$ with respect to some base flag $\Phi$, and let $\rho$ be an involutory automorphism of $\m$ fixing the vertex $v$ of $\Phi$ and inverting $\sigma_1$ and $\sigma_2$.
Then there exists a reflexible polytopal map $\m^\pi$ with the skeleton $\Gamma$ such that $\eta_1:=\sigma_1\sigma_2\rho$ and $\eta_2:=\sigma_2$ are the distinguished generators of $\Aut^+(\m^\pi)$ with respect to some base flag. The map $\m^\pi$ is non-orientable if and only if $\rho \in \langle \eta_1,\eta_2\rangle$.
\end{lemma}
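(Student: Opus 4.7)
The plan is to apply Theorem \ref{prop:undergraph} with the new pair of generators $\eta_1 = \sigma_1\sigma_2\rho$ and $\eta_2 = \sigma_2$, keeping the same base vertex $v$, and then to recognise the original $\rho$ as the distinguished reflection of the resulting map. The three hypotheses of that theorem are to be checked in turn. Condition (i) is inherited from $\sigma_2$. Condition (ii) holds because $\sigma_2$ and $\rho$ both fix $v$ and therefore permute its neighbourhood, so $v\eta_1 = (v\sigma_1)\sigma_2\rho$ is a neighbour of $v$ (since $v\sigma_1 = w$ is one). For condition (iii), I use $\rho\sigma_2\rho = \sigma_2^{-1}$ and $\rho^2 = 1$ to rewrite
\[
\eta_1\eta_2 \;=\; \sigma_1\sigma_2\rho\sigma_2 \;=\; \sigma_1\sigma_2(\sigma_2^{-1}\rho) \;=\; \sigma_1\rho,
\]
and then $\rho\sigma_1\rho = \sigma_1^{-1}$ yields $(\sigma_1\rho)^2 = 1$.

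With the hypotheses in place, Theorem \ref{prop:undergraph} produces a polytopal rotary map $\m^\pi$ with skeleton $\G$ and rotational group $\langle \eta_1, \eta_2\rangle$, with $\eta_1$ and $\eta_2$ as the distinguished generators for an appropriate base flag. To upgrade this to reflexibility, I verify that the same $\rho$ serves as a reflection for $\m^\pi$: it is an involution fixing $v$, it inverts $\eta_2 = \sigma_2$ by hypothesis, and it inverts $\eta_1$ via
\[
\rho\eta_1\rho \;=\; \rho\sigma_1\sigma_2 \;=\; \sigma_1^{-1}\sigma_2^{-1}\rho \;=\; \sigma_2\sigma_1\rho \;=\; \eta_1^{-1},
\]
where the third equality uses $\sigma_1^{-1}\sigma_2^{-1} = \sigma_2\sigma_1$, an immediate consequence of $(\sigma_1\sigma_2)^2 = 1$ from Lemma \ref{chiralorreflexible}. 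In particular $\rho$ normalises $\langle \eta_1, \eta_2\rangle$, so Theorem \ref{prop:undergraph} together with Lemma \ref{chiralorreflexible} guarantees that $\m^\pi$ is reflexible with $\rho$ as its distinguished reflection.

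The final dichotomy is then a direct reading of Lemma \ref{chiralorreflexible}: a reflexible polytopal map is non-orientable precisely when its distinguished reflection lies in its rotational group. Since the rotational group of $\m^\pi$ is $\langle \eta_1, \eta_2\rangle$ and the reflection is $\rho$, this gives $\m^\pi$ non-orientable if and only if $\rho \in \langle \eta_1, \eta_2\rangle$. I do not expect any serious obstacle in this argument; the only work worth doing carefully is the verification of condition (iii) and the analogous inversion identity $\rho\eta_1\rho = \eta_1^{-1}$, both of which reduce to one-line manipulations from the braid-type relations $(\sigma_1\sigma_2)^2 = \rho^2 = 1$ and $\sigma_i^{\rho} = \sigma_i^{-1}$ provided by the reflexibility of $\m$.
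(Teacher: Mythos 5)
Your verification of conditions (i)--(iii) and of the inversion identities $\eta_1^\rho=\eta_1^{-1}$, $\eta_2^\rho=\eta_2^{-1}$ matches the paper's computations, but there is a genuine gap at the point where you say that Theorem~\ref{prop:undergraph} ``produces'' the map $\m^\pi$: conditions (i)--(iii) alone do not yield any map. The conclusions of that theorem are split into case (a), which requires $|\Stab_H(vw)|=1$ (and gives an \emph{orientable} map), and case (b), which requires $|\Stab_H(vw)|=2$ \emph{together with} an inverting involution $\rho\in H$ (and gives a \emph{non-orientable} one), where $H=\langle\eta_1,\eta_2\rangle$. You never compute or bound $|\Stab_H(vw)|$, so neither the existence of $\m^\pi$ nor --- more importantly --- the orientability dichotomy, which in this framework is decided precisely by which of (a)/(b) applies, is actually established. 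The paper closes exactly this step: since $H\le\Aut(\m)$ and $\Aut(\m)$ acts semiregularly on flags, the arc-stabiliser in $H$ has order at most $2$; and since $\Aut(\m)=\langle\sigma_1,\sigma_2,\rho\rangle=\langle\eta_1,\eta_2,\rho\rangle=H\langle\rho\rangle$, one has $|\Stab_H(vw)|=2$ exactly when $H=\Aut(\m)$, i.e.\ exactly when $\rho\in H$. With this in hand, case (b) applies (with your $\rho$) when $\rho\in H$, giving the non-orientable reflexible map, while case (a) applies when $\rho\notin H$, with reflexibility supplied by $\rho$ through the criterion stated in (a); the ``non-orientable iff $\rho\in H$'' statement is then immediate.

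A secondary soft spot is your last step: you read Lemma~\ref{chiralorreflexible} as saying that a reflexible map is non-orientable precisely when \emph{the} reflection $\rho$ lies in the rotational group, but the lemma only asserts the existence of \emph{some} involution with those properties. To run your ``only if'' direction with the specific $\rho$ you would either have to show that any two such reflections differ by an element of $H$, or argue via the second bullet of Lemma~\ref{chiralorreflexible} that $\rho\notin H$ forces orientability --- and for that you must first know that $\rho$ is an automorphism of $\m^\pi$, i.e.\ that it preserves the facial cycles of the new map, which again is information delivered by Theorem~\ref{prop:undergraph} (its part (a) criterion is phrased in terms of graph automorphisms precisely to avoid this issue). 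In short, the algebraic identities you verified are the easy half; the missing arc-stabiliser analysis is what both licenses the application of Theorem~\ref{prop:undergraph} and settles which surface type you land on.
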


\begin{proof}
We need to show that the conditions for $\sigma_1$ and $\sigma_2$ of Theorem~\ref{prop:undergraph} are satisfied by $\eta_1$ and $\eta_2$.
Condition (i) of Theorem~\ref{prop:undergraph} is trivially satisfied by $\eta_2$.
Next, since $\sigma_1\sigma_2$ is an involution and $\rho$ inverts $\sigma_1$, we see that $v\eta_1 = v\sigma_1\sigma_2\rho = v\sigma_2^{-1}\sigma_1^{-1}\rho = v\rho\sigma_1 = v\sigma_1$, showing that condition (ii) of Theorem~\ref{prop:undergraph} is satisfied by $\eta_1$.
Since $\rho$ also inverts $\sigma_2$, we see that $(\eta_1\eta_2)^2 = (\sigma_1\sigma_2\rho\sigma_2)^2 = (\sigma_1\rho)^2 = 1$, implying that condition (iii) of Theorem~\ref{prop:undergraph} is  satisfied by $\eta_1$ and $\eta_2$.

It is also straightforward to see that $\rho$ inverts $\eta_1$: $\eta_1^\rho = \rho\sigma_1\sigma_2 = \rho \sigma_2^{-1}\sigma_1^{-1}=(\sigma_1\sigma_2\rho)^{-1}=\eta_1^{-1}$. Set $H = \langle \eta_1, \eta_2\rangle$ and observe that $H \leq \Aut(\m)$, implying that the arc stabiliser of an arc in the group $H$ has order at most $2$. In fact, since clearly $H = \Aut(\m)$ if and only if $\rho \in H$, the arc stabiliser of an arc in the group $H$ has order $2$ or $1$, depending on whether $\rho \in H$ or not, respectively. Theorem~\ref{prop:undergraph} thus implies that $\eta_1$ and $\eta_2$ are the distinguished generators (with respect to some base flag) of a reflexible polytopal map $\m^\pi$ with skeleton $\G$, and that $\m^\pi$ is non-orientable if and only if $\rho \in H$.
\end{proof}

\section{The graphs $C_n[mK_1]$}
\label{sec:thegraphs}

Let $m$ and $s$ be positive integers where $m \geq 3$ is odd, and let $n = ms$. 
We let $\G = C_n[mK_1]$ be the lexicographic product of the $n$-cycle $C_n$ by the edgeless graph $mK_1$ of order $m$. 
That is, the vertices of $\G$ are of the form $(v,w)$, where $v\in C_n$ and $w\in mK_1$, and there is an edge between the vertices $(v_1,w_1)$ and $(v_2, w_2)$ whenever there is an edge between $v_1$ and $v_2$ in $C_n$.
Hence, $\G$ is a graph with $mn$ vertices and each vertex has valency $2m$, implying that $\G$ has $2m^2 n$ arcs.

By labelling the vertices of $C_n$ and $mK_1$ appropriately, we may assume that the vertex set of $C_n[mK_1]$ is the Cartesian product $\{1,\ldots,n\} \times \{1,\ldots,m\}$
with the vertices $(i_1,j_1)$ and $(i_2,j_2)$ adjacent  if and only if $i_1=i_2\pm 1$, where the sum is taken mod $n$ and where we use $n$ instead of $0$ (this convention is taken throughout the paper).

It is well known and easy to see that the automorphism group $\Aut(\G)$ of $\G$ is the wreath product $S_m \wr D_n$ of the symmetric group $S_m$ by the dihedral group $D_n$ (recall that $n\neq 4$). In other words, it is 
equal to the semidirect product
$$
	(S_m \times S_m \times  \cdots \times S_m) \rtimes D_n
$$
in its imprimitive action, 
where we have $n$ copies of $S_m$ in the direct product and where $D_n$ acts on this direct product by permuting the coordinates. 
We write the elements of this group in the form $(\alpha_1, \alpha_2, \ldots , \alpha_n)x$, where $\alpha_i \in S_m$ for each $i$, $1 \leq i \leq n$, and $x \in D_n$. 
The action of $\Aut(\G)$ on the vertex-set of $\G$ is then given by
\begin{equation}
(i,j)(\alpha_1, \alpha_2, \ldots , \alpha_n)x = (ix,j\alpha_i)
\end{equation}
for every vertex $(i,j) \in \{1,\ldots,n\} \times \{1,\ldots, m\} $ and $(\alpha_1, \alpha_2, \ldots , \alpha_n)x\in S_m \wr D_n$.

Let $c \in S_m$ be the $m$-cycle $c = (1\,2\,\cdots\,m)$ and let $t \in S_m$ be the involution fixing $1$ and interchanging each $j$, $2 \leq j \leq m$, with $m-j+2$. Furthermore let $r \in D_n$ be the $n$-cycle $r = (1\,2\,\cdots \,n)$ and let $z \in D_n$ be the reflection fixing $1$ and interchanging each $j$, $2 \leq j \leq n$, with $n-j+2$. We point out that for each $(\alpha_1, \alpha_2, \ldots , \alpha_n) \in S_m \times S_m \times  \cdots \times S_m$ the following holds:
\begin{eqnarray}
\label{eq:rz}
	r(\alpha_1, \alpha_2, \ldots , \alpha_n) &= &(\alpha_2,\alpha_3, \ldots , \alpha_n,\alpha_1)r \nonumber \\ &\mathrm{and}&\\
	z(\alpha_1, \alpha_2, \ldots , \alpha_n) &=& (\alpha_1, \alpha_n, \alpha_{n-1}, \ldots , \alpha_3, \alpha_2)z. \nonumber 
\end{eqnarray}

\section{Chiral maps}
\label{sec:chiral}

In this section we construct a family of chiral maps with the skeletons $C_n[mK_1]$, where $m\geq 3$ is an odd integer and $n=sm$, with $s$ a positive integer. 
Throughout this section, we let $\G$ denote the graph $C_n[mK_1]$ and let the elements $c,t,r,z \in \Aut(\G)$ be as in Section~\ref{sec:thegraphs}.
We start by the following definitions and a lemma:
\begin{eqnarray}
\label{eq:chiral}
        \ell_i & = & 1+1+2+3+\cdots + (i-2) = 1 + \frac{(i-2)(i-1)}{2} \hbox{ for each } i\in \{2,3,\ldots,n\},  \nonumber \\  
	\sigma_1 &=& (c,1,1,\ldots ,1)r,   \nonumber  \\
	 \sigma_2 &=& (t,tc^{\ell_2},tc^{\ell_3},tc^{\ell_4},\ldots , tc^{\ell_n})z\> = \> (t,tc,tc^2,tc^4,\ldots,tc^{1+(n-2)(n-1)/2})z,\ \text{and}\\ 
	 G &=& \langle \sigma_1, \sigma_2 \rangle. \nonumber
\end{eqnarray}

\begin{lemma}
\label{lemma:rotary}
Let $\sigma_1$, $\sigma_2$ and $G$ be as in~\eqref{eq:chiral}. Then the following holds:
\begin{equation*}
|\sigma_1| = mn, \hspace{1cm} |\sigma_2| = 2m, \hspace{1cm} |\sigma_1\sigma_2| = 2 \hspace{1cm}\text{and} \hspace{1cm} |G| = 2m^2 n.
\end{equation*}

\end{lemma}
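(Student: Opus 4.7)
The plan is to work directly with the explicit wreath-product form of $\sigma_1$ and $\sigma_2$ and the action of $S_m\wr D_n$ on $\{1,\ldots,n\}\times\{1,\ldots,m\}$ given by (1). From the definitions, $(i,j)\sigma_1=(2,jc)$ if $i=1$ and $(i+1,j)$ otherwise, so one full traversal of $C_n$ applies $c$ to the second coordinate exactly once. Thus $(i,j)\sigma_1^n=(i,jc)$ for every $(i,j)$, giving $|\sigma_1|=mn$. Analogously, using $tct=c^{-1}$, I would compute $(i,j)\sigma_2^2=(i,jc^{\ell_{iz}-\ell_i})$, where $iz=1$ if $i=1$ and $iz=n+2-i$ otherwise.

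The two arithmetic facts driving the rest of the proof are the congruences
$$ \ell_{iz}-\ell_i\equiv i-1 \pmod m \quad(1\le i\le n),\qquad \ell_k\equiv\ell_{n+3-k}\pmod m\quad(3\le k\le n).$$
Both reduce to elementary identities: expanding $\ell_i=1+(i-2)(i-1)/2$ gives $\ell_{n+2-i}-\ell_i=\tfrac{n^2+n-2}{2}-i(n-1)$ and $\ell_k-\ell_{n+3-k}=\tfrac{n(2k-n-3)}{2}$. Writing $n=sm$, both $sm(sm+1)/2$ and $sm(2k-sm-3)/2$ are divisible by $m$; the odd-$m$ hypothesis is used here to verify that the halved expressions are integers of the right divisibility. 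The first congruence immediately yields $(i,j)\sigma_2^2=(i,jc^{i-1})$, so $\sigma_2^{2k}=1$ if and only if $m\mid k$ while odd powers of $\sigma_2$ move the first coordinate; hence $|\sigma_2|=2m$.

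For $(\sigma_1\sigma_2)^2=1$, I would first compute $(i,j)\sigma_1\sigma_2=(g(i),\, j\alpha_i\beta_{i+1})$ with $\alpha_1=c$, $\alpha_i=1$ otherwise, $\beta_i=tc^{\ell_i}$ (and $\beta_1=t$), and $g(i)=(i+1)z$; the map $g$ is readily checked to be an involution on $\{1,\ldots,n\}$, so the first coordinate returns to $i$ after squaring. It then suffices to verify $\alpha_i\beta_{i+1}\alpha_{g(i)}\beta_{g(i)+1}=1$ for each $i$. The boundary cases $i=1$ and $i=n$ collapse to $ctct=1$ via $tct=c^{-1}$; the remaining cases, after the same simplification $tc^at=c^{-a}$, become exactly the second congruence above.

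Finally, for $|G|=2m^2n$ I would use the retraction $\pi\colon S_m\wr D_n\to D_n$. Since $\sigma_1\pi=r$ and $\sigma_2\pi=z$ generate $D_n$, we have $G\pi=D_n$. Set $a:=\sigma_1^n=(c,c,\ldots,c)$ and $b:=\sigma_2^2=(1,c,c^2,\ldots,c^{n-1})$, both in $(S_m)^n\cap G$; in additive $\ZZ_m$-notation they are the vectors $(1,1,\ldots,1)$ and $(0,1,\ldots,n-1)$, hence $\ZZ_m$-independent and spanning a subgroup of order exactly $m^2$. Short conjugation computations give $a^{\sigma_1}=a$, $b^{\sigma_1}=ba^{-1}$, $a^{\sigma_2}=a^{-1}$, $b^{\sigma_2}=b$, so $\langle a,b\rangle\trianglelefteq G$. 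In $G/\langle a,b\rangle$, the images of $\sigma_1$ and $\sigma_2$ satisfy the defining relations of $D_n$, giving $|G|\le 2n\cdot m^2$; the matching lower bound $|G|\ge |G\pi|\cdot|G\cap\ker\pi|\ge 2n\cdot m^2$ is immediate. The main obstacle is the arithmetic bookkeeping for the two congruences, where the odd-$m$ hypothesis must be tracked carefully to make the halved expressions integral and divisible by $m$.
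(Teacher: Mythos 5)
Your proposal is correct. The computations of $|\sigma_1|$, $|\sigma_2|$ and $|\sigma_1\sigma_2|$ follow essentially the same route as the paper: both hinge on the observation that $\sigma_1^n=(c,c,\ldots,c)1$, that $\sigma_2^2=(1,c,c^2,\ldots,c^{n-1})1$, and on the congruence $\ell_k\equiv\ell_{n+3-k}\pmod m$ for $3\le k\le n$, which the paper proves by exactly the parity-of-$n(n+3-2i)$ argument you sketch (this is indeed where the hypotheses $m$ odd and $m\mid n$ enter); whether one manipulates the $n$-tuples via the commutation rules~\eqref{eq:rz}, as the paper does, or tracks the pointwise action on $(i,j)$, as you do, is only a cosmetic difference. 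Where you genuinely diverge is the count $|G|=2m^2n$. The paper first shows that $\sigma_2^2$ normalises $\langle\sigma_1\rangle$ (via $\sigma_2^{-2}\sigma_1\sigma_2^2=\sigma_1^{n+1}$), so that $H=\langle\sigma_1,\sigma_2^2\rangle$ has order $m^2n$, and then extends $H$ by the involution $\sigma_2\sigma_1$, which is shown to normalise $H$. You instead isolate the normal subgroup $N=\langle\sigma_1^n,\sigma_2^2\rangle\cong C_m\times C_m$ (checking the conjugation relations $b^{\sigma_1}=ba^{-1}$, $a^{\sigma_2}=a^{-1}$, which are correct), observe that $G/N$ satisfies the dihedral presentation of order $2n$, and squeeze $|G|$ between this upper bound and the lower bound coming from the retraction onto $D_n$. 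Both decompositions are valid; yours exhibits $G$ as (an extension of $D_n$ by) $C_m\times C_m$ and makes the upper bound conceptual via a presentation argument, while the paper's avoids invoking any presentation and instead builds $G$ up from the cyclic subgroup $\langle\sigma_1\rangle$ of order $mn$. The only places where your write-up is merely sketched rather than carried out — the integrality and $m$-divisibility of $\tfrac{n(n+1)}{2}$ and $\tfrac{n(2k-n-3)}{2}$ — are elementary case checks ($s$ even versus $n$ odd) that do go through, so there is no gap.
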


\begin{proof}
    Using~\eqref{eq:rz} we first note that:
\begin{equation}
\label{eq:sig1n}
	\sigma_1^n = (c,c,c,\ldots , c)1,
\end{equation}
which implies that the order of $\sigma_1^n$ is the same as that of $c$, and therefore the order of $\sigma_1$ is $mn$.
Observe that for each $i$ with $3 \leq i \leq n$ one of $n$ and $n+3-2i$ is even. Since $m$ is odd and $n = ms$, the product $n(n+3-2i)$ is an even multiple of $m$, implying that 
$$
	\ell_{n-i+3} = 1+\frac{(n-i+1)(n-i+2)}{2} = 1 + \frac{n(n+3-2i) + (i-1)(i-2)}{2} \equiv \ell_i \pmod{m}.
$$
Therefore, 
\begin{equation}
\label{eq:sigma2}
	\sigma_2 = (t,tc,tc^2,tc^4,tc^7,\ldots , tc^7,tc^4,tc^2)z.
\end{equation}
Using~\eqref{eq:rz} again it now easily follows that 
\begin{equation}
\label{eq:sig2squared}
	\sigma_2^2 = (1,c,c^2,c^3,\ldots, c^{n-1})1.
\end{equation}
In particular, $\sigma_2$ is of order $2m$. 
By~\eqref{eq:rz}, ~\eqref{eq:chiral} and \eqref{eq:sigma2}, it follows that
$$
	\sigma_1\sigma_2 = (t,tc^2,tc^4,\ldots , tc^4, tc^2, t)rz,
$$ 
and one then easily verifies that this is an involution, settling the first three claims of the lemma.

We shall now determine the order of the group $G$. To do so, start by noticing that from~\eqref{eq:chiral} and \eqref{eq:sig2squared} we also easily deduce that
$$
	\sigma_2^{-2}\sigma_1\sigma_2^2 = (c^2,c,c,\ldots,c)r = \sigma_1^{n+1} \in \langle \sigma_1 \rangle.
$$
Therefore, since the intersection $\langle \sigma_1 \rangle \cap \langle \sigma_2 \rangle$ clearly coincides with the intersection
$\langle \sigma_1^n \rangle \cap \langle \sigma_2^2 \rangle$, which by~\eqref{eq:sig1n} and~\eqref{eq:sig2squared} is trivial, this shows that the subgroup $H = \langle \sigma_1, \sigma_2^2 \rangle$ is of order $m^2n$. 
Now, since $\sigma_1\sigma_2$ is an involution, so is $\sigma_2\sigma_1$, and thus $\sigma_2\sigma_1 = \sigma_1^{-1}\sigma_2^{-1}$. 
Therefore
$$
	(\sigma_2\sigma_1) \sigma_1 (\sigma_2\sigma_1) = \sigma_1^{-1} \sigma_2^{-2} \in H
$$
and
$$
	(\sigma_2\sigma_1) \sigma_2^2(\sigma_2\sigma_1) = \sigma_1^{-1} \sigma_2^2 \sigma_1 \in H,
$$
implying that $\sigma_2\sigma_1$ normalises $H$. Since $\langle \sigma_1, \sigma_2 \rangle = \langle \sigma_1, \sigma_2^2, \sigma_2\sigma_1 \rangle$, this finally shows that the group $G$ has order $2m^2n$. 
\end{proof}

The above lemma has the following immediate consequence.

\begin{proposition}
\label{pro:rotary}
Let $m\geq 3$ be an odd integer and let $n=sm$, where $s$ is a positive integer. 
  Let $\sigma_1$, $\sigma_2$ and $G$ be as in~\eqref{eq:chiral}. Then $\G=C_n[mK_1]$ is the skeleton of a polytopal chiral map $\m$  of type $\{mn, 2m\}$ with $\Aut(\m) = \Aut^+(\m)=G$. 
\end{proposition}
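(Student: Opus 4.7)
The plan is to apply Theorem~\ref{prop:undergraph}(a) at the base vertex $v = (1,1)$, with neighbour $w = v\sigma_1 = (2,2)$. Lemma~\ref{lemma:rotary} already gives $(\sigma_1\sigma_2)^2 = 1$, $|\sigma_1| = mn$, $|\sigma_2| = 2m$ and $|G| = 2m^2 n$. I would check that $\sigma_2$ fixes $v$ (its first coordinate is $t$ and its $D_n$-part is $z$, both of which fix $1$), and that $\langle\sigma_2\rangle$ is transitive on the $2m$ neighbours of $v$ (a short computation using~\eqref{eq:sig2squared} shows that $\langle\sigma_2^2\rangle$ already has an orbit of size $m$ on $\{2\}\times\{1,\ldots,m\}$, after which $\sigma_2$ moves to the other half of the neighbourhood). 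Since $\G$ has exactly $2m^2 n$ arcs (Section~\ref{sec:thegraphs}) and $|G| = 2m^2 n$, the group $G$ is arc-regular, so $|\Stab_G(vw)| = 1$ and Theorem~\ref{prop:undergraph}(a) yields a polytopal rotary map $\m$ of type $\{mn,2m\}$ on an orientable surface with $\Aut^+(\m) = G$.

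It then remains to prove that $\m$ is chiral. By Theorem~\ref{prop:undergraph}(a) together with its ``moreover'', it suffices to show that no involutory $\rho\in\Aut(\Gamma)$ simultaneously fixes $v$, inverts $\sigma_1$ and inverts $\sigma_2$. Writing $\rho = (\beta_1,\ldots,\beta_n) y$, the condition $v\rho = v$ forces $y\in\{1,z\}$ and $\beta_1$ to fix $1$. The case $y=1$ is ruled out immediately, because then $\rho$ preserves every first coordinate, while $\rho$ is required to send $w=(2,2)$ to $u = v\sigma_1^{-1} = (n,1)$. For $y=z$, expanding $\sigma_1^\rho = \sigma_1^{-1}$ coordinatewise via~\eqref{eq:rz} yields $\beta_2=\beta_3=\cdots=\beta_n$, $\beta_1=\beta_2 c^{-1}$ and $\beta_2^{-1}c\beta_2 = c^{-1}$. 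Combined with $1\beta_1=1$ and $\beta_2^2=1$ (from $\rho^2=1$), these pin down the unique candidate
\[
\rho = (t,\, tc,\, tc,\, \ldots,\, tc)\, z.
\]

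The final step, which is the main obstacle, is to check that this candidate $\rho$ does \emph{not} invert $\sigma_2$. Using~\eqref{eq:sigma2}, \eqref{eq:sig2squared} and the identity $tc^a\cdot tc = c^{1-a}$, one computes both $\sigma_2\rho$ and $\rho\sigma_2^{-1}$ in standard form; equating them reduces to the family of congruences
\[
\ell_i + \ell_{n-i+2} \equiv i+1 \pmod m, \qquad i = 2, \ldots, n.
\]
Substituting $\ell_k = 1 + (k-1)(k-2)/2$ and using $n\equiv 0 \pmod m$ together with $n(n+1)/2\equiv 0 \pmod m$ (valid since $m$ is odd and $n=sm$), these collapse to $(i-1)(i-2)\equiv 0 \pmod m$ for every $i\in\{2,\ldots,n\}$. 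Taking $i=3$ gives $m\mid 2$, contradicting $m\geq 3$. This rules out $\rho$, so $\m$ is chiral, and $\Aut(\m) = \Aut^+(\m) = G$ then follows from Lemma~\ref{chiralorreflexible}. The whole argument is tight because the exponents $\ell_i$ in $\sigma_2$ are engineered precisely so that, after reduction modulo $m$, the obstruction to $\rho$ inverting $\sigma_2$ becomes exactly the non-divisibility of $2$ by $m$.
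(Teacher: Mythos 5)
Your proposal is correct, and its first half is the paper's argument verbatim in outline: Lemma~\ref{lemma:rotary}, the observation that $|G|=2m^2n$ equals the number of arcs, and Theorem~\ref{prop:undergraph}(a) give the orientable rotary map of type $\{mn,2m\}$ with $\Aut^+(\m)=G$ (your transitivity check for $\langle\sigma_2\rangle$ via $\sigma_2^2$ is a sound shortcut for the explicit $2m$-cycle the paper writes out). For chirality both proofs assume an involutory $\rho$ fixing $v$ and inverting $\sigma_1$, hence also $\sigma_2$ by the ``moreover'' clause of Theorem~\ref{prop:undergraph}, and both extract coordinate conditions from $\rho^2=1$ and $\sigma_1^\rho=\sigma_1^{-1}$; but the eliminations then diverge. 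The paper never pins $\rho$ down: from $\alpha_2=\cdots=\alpha_n$, $\alpha_2^2=1$, $\alpha_1c\alpha_2=1$ it uses the involutivity of $\rho\sigma_2$ to force $\alpha_2$ to commute with $tc$ and $tc^2$, hence with $c$, so $\alpha_2=1$ (the centraliser of $c$ in $S_m$ is $\langle c\rangle$ and $m$ is odd) and then $\alpha_1=c^{-1}$ contradicts $\alpha_1^2=1$; notably it never needs the condition that $\alpha_1$ fixes the point $1$. You instead use exactly that extra condition $1\beta_1=1$ to isolate the unique candidate $\rho=(t,tc,\ldots,tc)z$ --- your conditions $\beta_2=\cdots=\beta_n$, $\beta_1=\beta_2c^{-1}$, $\beta_2^{-1}c\beta_2=c^{-1}$ are indeed the complete consequence of $\sigma_1^\rho=\sigma_1^{-1}$ --- and then show this candidate fails to invert $\sigma_2$: your congruences $\ell_i+\ell_{n-i+2}\equiv i+1\pmod m$ (which is what one gets when $\sigma_2^{-1}$ is rewritten via \eqref{eq:sig2squared}) do collapse, since $\ell_{n-i+2}\equiv\ell_{i+1}\pmod m$ and $m$ is odd, to $(i-1)(i-2)\equiv 0\pmod m$, and $i=3$ gives the contradiction $m\mid 2$. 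Both routes are valid; the paper's is slightly shorter and avoids computing the candidate, while yours makes the obstruction completely explicit --- the unique would-be reflection and the precise congruence that the exponents $\ell_i$ are designed to violate.
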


\begin{proof}
Let $v:=(1,1)$ and note that $v\sigma_2 = (1,1)\sigma_2 = (1z,1t) = (1,1)$, showing that $\sigma_2$ fixes $v$.
 Observe that the neighbourhood of $v$ consists of the vertices of the form $(2,j)$ and $(n,k)$, for $1\leq j,k \leq m$. 
    Note that $(2,j)\sigma_2 = (2z, jtc)=(n, m-j+3)$ and $(n,k)\sigma_2 = (nz, ktc^2) = (2, m-k+4)$, where the second component is computed modulo $m$ if necessary. 
    Therefore, $ \sigma_2 $ cyclically permutes the vertices $(2,2), (n,1), (2,3), (n,m), (2,4), (n,m-1), (2,5), \dots, (n,2)$ around $v$, implying that $\langle \sigma_2 \rangle$ acts transitively on the neighbours of $v$. Hence, $\sigma_2$ satisfies condition (i) of Theorem~\ref{prop:undergraph}.

Further, $v\sigma_1 = (2,2)$, which is a neighbour of $v$ (in fact, the cycle through $v$ induced by $\sigma_1$ is 
$((1,1) , (2,2), (3,2), \dots, (n,2), (1,2), (2,3), \dots, (1,3), (2,4), \dots, (n,1))$, and is depicted in Figure~\ref{fig:chiralface}), showing that $\sigma_1$ satisfies condition (ii) of Theorem~\ref{prop:undergraph}. 
\begin{figure}
    \centering
    \newrgbcolor{qqwuqq}{0. 0.4 0.}
\psset{xunit=2.0cm,yunit=1.5cm,algebraic=true,dimen=middle,dotstyle=o,dotsize=5pt 0,linewidth=1.6pt,arrowsize=3pt 2,arrowinset=0.25}
\begin{pspicture*}(2.9305001166246356,-5.560868898641911)(10.13024663923072,-2.236422182163391)
\psline[linewidth=2.pt,linecolor=qqwuqq](4.,-3.)(5.,-3.)
\psline[linewidth=2.pt,linecolor=qqwuqq](5.,-3.)(6.,-3.)
\psline[linewidth=1.pt](6.,-3.)(7.,-3.)
\psline[linewidth=2.pt,linecolor=qqwuqq](7.,-3.)(8.,-3.)
\psline[linewidth=2.pt,linecolor=qqwuqq](8.,-3.)(9.,-3.)
\psline[linewidth=2.pt,linecolor=qqwuqq](9.,-3.)(9.64385302515989,-3.0030743857047866)
\psline[linewidth=2.pt,linecolor=qqwuqq](4.,-3.)(3.4813532062736874,-3.0030743857047866)
\psline[linewidth=2.pt,linecolor=qqwuqq](4.,-4.)(5.,-4.)
\psline[linewidth=2.pt,linecolor=qqwuqq](4.,-4.)(3.4813532062736874,-4.003074385704787)
\psline[linewidth=2.pt,linecolor=qqwuqq](5.,-4.)(6.,-4.)
\psline[linewidth=1.pt](6.,-4.)(7.,-4.)
\psline[linewidth=2.pt,linecolor=qqwuqq](7.,-4.)(8.,-4.)
\psline[linewidth=2.pt,linecolor=qqwuqq](8.,-4.)(9.,-4.)
\psline[linewidth=2.pt,linecolor=qqwuqq](9.,-4.)(9.64385302515989,-4.003074385704787)
\psline[linewidth=2.pt,linecolor=qqwuqq](4.,-5.)(3.4813532062736874,-5.003074385704787)
\psline[linewidth=2.pt,linecolor=qqwuqq](4.,-5.)(5.,-5.)
\psline[linewidth=2.pt,linecolor=qqwuqq](5.,-5.)(6.,-5.)
\psline[linewidth=1.pt](6.,-5.)(7.,-5.)
\psline[linewidth=2.pt,linecolor=qqwuqq](7.,-5.)(8.,-5.)
\psline[linewidth=2.pt,linecolor=qqwuqq](8.,-5.)(9.,-5.)
\psline[linewidth=2.pt,linecolor=qqwuqq](9.,-5.)(9.64385302515989,-5.003074385704787)
\psline[linewidth=1.pt](4.,-3.)(3.5047403023605423,-3.4591227593984533)
\psline[linewidth=1.pt](4.,-4.)(3.4930467543171146,-3.657913076136718)
\psline[linewidth=1.pt](4.,-3.)(3.457966110186833,-3.9268646811355468)
\psline[linewidth=1.pt](4.,-4.)(3.5047403023605423,-4.459122759398453)
\psline[linewidth=1.pt](4.,-5.)(3.4930467543171146,-4.657913076136718)
\psline[linewidth=1.pt](4.,-3.)(5.,-4.)
\psline[linewidth=1.pt](5.,-4.)(6.,-3.)
\psline[linewidth=2.pt,linecolor=qqwuqq](6.,-3.)(7.,-4.)
\psline[linewidth=1.pt](7.,-4.)(8.,-3.)
\psline[linewidth=1.pt](8.,-3.)(9.,-4.)
\psline[linewidth=1.pt](9.,-4.)(8.,-5.)
\psline[linewidth=1.pt](8.,-5.)(7.,-4.)
\psline[linewidth=1.pt](7.,-4.)(6.,-5.)
\psline[linewidth=1.pt](6.,-5.)(5.,-4.)
\psline[linewidth=1.pt](5.,-4.)(4.,-5.)
\psline[linewidth=1.pt](4.,-5.)(5.,-3.)
\psline[linewidth=1.pt](5.,-3.)(6.,-5.)
\psline[linewidth=2.pt,linecolor=qqwuqq](6.,-5.)(7.,-3.)
\psline[linewidth=1.pt](7.,-3.)(8.,-5.)
\psline[linewidth=1.pt](8.,-5.)(9.,-3.)
\psline[linewidth=1.pt](9.,-3.)(8.,-4.)
\psline[linewidth=1.pt](8.,-4.)(7.,-3.)
\psline[linewidth=1.pt](7.,-3.)(6.,-4.)
\psline[linewidth=1.pt](6.,-4.)(5.,-3.)
\psline[linewidth=1.pt](5.,-3.)(4.,-4.)
\psline[linewidth=1.pt](4.,-4.)(5.,-5.)
\psline[linewidth=1.pt](5.,-5.)(6.,-4.)
\psline[linewidth=2.pt,linecolor=qqwuqq](6.,-4.)(7.,-5.)
\psline[linewidth=1.pt](7.,-5.)(8.,-4.)
\psline[linewidth=1.pt](8.,-4.)(9.,-5.)
\psline[linewidth=1.pt](9.,-5.)(8.,-3.)
\psline[linewidth=1.pt](8.,-3.)(7.,-5.)
\psline[linewidth=1.pt](7.,-5.)(6.,-3.)
\psline[linewidth=1.pt](6.,-3.)(5.,-5.)
\psline[linewidth=1.pt](5.,-5.)(4.,-3.)
\psline[linewidth=1.pt](9.,-3.)(9.495259697639458,-3.459122759398453)
\psline[linewidth=1.pt](9.,-3.)(9.542033889813167,-3.9268646811355463)
\psline[linewidth=1.pt](9.,-4.)(9.506953245682887,-3.6579130761367176)
\psline[linewidth=1.pt](9.,-4.)(9.495259697639458,-4.459122759398453)
\psline[linewidth=1.pt](9.,-5.)(9.506953245682887,-4.657913076136718)
\rput[tl](5.8,-2.6){(1,1)}
\rput[tl](5.8,-4.2272245763336675){(1,2)}
\rput[tl](5.8,-5.212961684126717){(1,3)}
\rput[tl](6.8,-2.6){(2,1)}
\psline[linewidth=1.pt](4.,-5.)(3.457966110186833,-4.073135318864454)
\psline[linewidth=1.pt](8.990450256537116,-4.99995440016023)(9.541310749621408,-4.078308268501542)
\begin{scriptsize}
\psdots[dotstyle=*](5.,4.)
\psdots[dotstyle=*](5.5,3.1339745962155616)
\psdots[dotstyle=*](6.5,3.1339745962155616)
\psdots[dotstyle=*](7.,4.)
\psdots[dotstyle=*](6.5,4.866025403784438)
\psdots[dotstyle=*](5.5,4.866025403784438)
\psdots[dotstyle=*](4.,4.)
\psdots[dotstyle=*](5.,5.732050807568877)
\psdots[dotstyle=*](4.,4.)
\psdots[dotstyle=*](7.,5.732050807568877)
\psdots[dotstyle=*](5.,5.732050807568877)
\psdots[dotstyle=*](8.,4.)
\psdots[dotstyle=*](7.,5.732050807568877)
\psdots[dotstyle=*](7.,2.2679491924311233)
\psdots[dotstyle=*](8.,4.)
\psdots[dotstyle=*](5.,2.2679491924311233)
\psdots[dotstyle=*](7.,2.2679491924311233)
\psdots[dotstyle=*](4.,4.)
\psdots[dotstyle=*](5.,2.2679491924311233)
\psdots[dotstyle=*](4.5,6.598076211353316)
\psdots[dotstyle=*](3.,4.)
\psdots[dotstyle=*](7.5,6.598076211353316)
\psdots[dotstyle=*](4.5,6.598076211353316)
\psdots[dotstyle=*](9.,4.)
\psdots[dotstyle=*](7.5,6.598076211353316)
\psdots[dotstyle=*](7.5,1.401923788646684)
\psdots[dotstyle=*](9.,4.)
\psdots[dotstyle=*](4.5,1.401923788646684)
\psdots[dotstyle=*](7.5,1.401923788646684)
\psdots[dotstyle=*](3.,4.)
\psdots[dotstyle=*](4.5,1.401923788646684)
\psdots[dotstyle=*](4.,-3.)
\psdots[dotstyle=*](5.,-3.)
\psdots[dotstyle=*](6.,-3.)
\psdots[dotstyle=*](7.,-3.)
\psdots[dotstyle=*](8.,-3.)
\psdots[dotstyle=*](9.,-3.)
\psdots[dotstyle=*](4.,-4.)
\psdots[dotstyle=*](4.,-4.)
\psdots[dotstyle=*](5.,-4.)
\psdots[dotstyle=*](4.,-4.)
\psdots[dotstyle=*](5.,-4.)
\psdots[dotstyle=*](6.,-4.)
\psdots[dotstyle=*](6.,-4.)
\psdots[dotstyle=*](7.,-4.)
\psdots[dotstyle=*](7.,-3.)
\psdots[dotstyle=*](8.,-3.)
\psdots[dotstyle=*](7.,-3.)
\psdots[dotstyle=*](8.,-3.)
\psdots[dotstyle=*](7.,-4.)
\psdots[dotstyle=*](8.,-4.)
\psdots[dotstyle=*](8.,-4.)
\psdots[dotstyle=*](9.,-4.)
\psdots[dotstyle=*](9.,-4.)
\psdots[dotstyle=*](4.,-5.)
\psdots[dotstyle=*](4.,-5.)
\psdots[dotstyle=*](5.,-5.)
\psdots[dotstyle=*](5.,-5.)
\psdots[dotstyle=*](6.,-5.)
\psdots[dotstyle=*](6.,-5.)
\psdots[dotstyle=*](7.,-5.)
\psdots[dotstyle=*](7.,-5.)
\psdots[dotstyle=*](8.,-5.)
\psdots[dotstyle=*](8.,-5.)
\psdots[dotstyle=*](9.,-5.)
\psdots[dotstyle=*](9.,-5.)
\psdots[dotstyle=*](4.,-4.)
\psdots[dotstyle=*](4.,-4.)
\psdots[dotstyle=*](4.,-4.)
\psdots[dotstyle=*](4.,-4.)
\psdots[dotstyle=*](4.,-5.)
\psdots[dotstyle=*](9.,-3.)
\psdots[dotstyle=*](9.,-3.)
\psdots[dotstyle=*](9.,-4.)
\psdots[dotstyle=*](9.,-4.)
\psdots[dotstyle=*](9.,-5.)
\psdots[dotstyle=*](9.,-4.)
\end{scriptsize}
\end{pspicture*}
    \caption{The graph $C_6[3K_1]$ and the base face of the chiral map constructed in Proposition~\ref{pro:rotary}.}
    \label{fig:chiralface}
\end{figure}
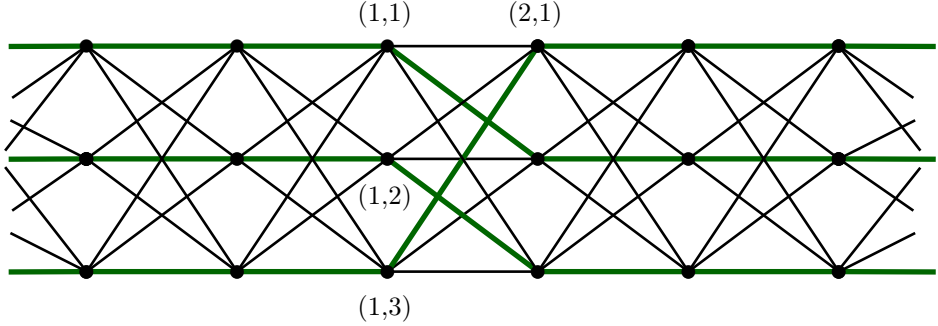

Moreover, as shown in Lemma~\ref{lemma:rotary}, $(\sigma_1\sigma_2)^2=1$, implying that condition (iii) of Theorem~\ref{prop:undergraph} is fulfilled. 
Theorem~\ref{prop:undergraph} then implies that $G$ is arc-transitive. Since $|G|=2m^2n$, which equals the number of arcs of $\G$, we see that
 the arc-stabiliser in $G$ is trivial. By part (a) of Theorem~\ref{prop:undergraph}, it follows that $\G$ is the skeleton of a polytopal map $\m$ 
of type $\{mn,2m\}$.

  Finally, to show that $\m$ is chiral, it suffices to show that $\Aut(\G)$ contains no involution $\rho$ fixing $v$, normalizing $G$ and inverting $\sigma_1$. By way of contradiction suppose such a $\rho$ exists and note that by Theorem~\ref{prop:undergraph} it inverts $\sigma_2$. Moreover, since it fixes $v$ and inverts $\sigma_1$ it must be of the form
$$
	\rho = (\alpha_1, \alpha_2, \ldots, \alpha_n)z.
$$
Since $\rho$ is an involution we deduce that
\begin{equation}
\label{eq:rho1}
	\alpha_1^2 = 1\ \text{and}\ \alpha_{n-i+2} = \alpha_i^{-1}\ \text{for}\ \text{all}\ i,\ 2 \leq i \leq n.
\end{equation}
By~\eqref{eq:chiral}, we see that $\rho \sigma_1 = (\alpha_1c, \alpha_2,\alpha_3, \ldots , \alpha_n)zr$, and so
$$
(\rho \sigma_1)^2 = (\alpha_1 c \alpha_2, \alpha_2 \alpha_1 c, \alpha_3 \alpha_n, \alpha_4\alpha_{n-1}, \ldots , \alpha_n\alpha_3)1. 
$$
Since $\rho$ inverts $\sigma_1$ we also deduce that $\alpha_1 c \alpha_2 = 1$ and $\alpha_{n-i+3} = \alpha_i^{-1}$ for all $i$ with $3 \leq i \leq n$. Together with~\eqref{eq:rho1}, this thus implies that
\begin{equation}
\label{eq:cond_rho}
	\alpha_1 c \alpha_2 = 1,\ \alpha_2 = \alpha_3 = \ldots = \alpha_n, \ \text{and}\ \alpha_2^2 = 1.
\end{equation}
Finally, by~\eqref{eq:sigma2} we now see that 
$$
	\rho \sigma_2 = (\alpha_1t, \alpha_2tc^2, \alpha_2tc^4, \alpha_2 tc^7, \ldots, \alpha_2 tc)1,
$$
which can thus only be an involution if $\alpha_2$ is centralised by $tc$ and $tc^2$. But then it is also centralised by $c$, and so the fact that the centraliser of $c$ in $S_m$ is $\langle c \rangle$ forces $\alpha_2 = 1$ (recall that $m$ is odd). Then~\eqref{eq:cond_rho} implies that $\alpha_1 c = 1$, which contradicts $\alpha_1^2 = 1$. This finally shows that no involutory automorphism $\rho$ fixing $v$, normalising $G$ and inverting $\sigma_1$ exists, and thus that the map $\m$ is chiral.
\end{proof}

\section{Reflexible maps}
\label{sec:reflexible}

In this section we construct non-orientable reflexible and orientable reflexible maps whose skeleton is $C_n[mK_1]$, with $n = sm$ where $m \geq 3$ is an odd integer and where $s$ is a positive integer not divisible by $4$. The constructions for $s$ odd and $s$ even are a bit different (even though there are many similarities). 

Throughout this section we let $\G = C_n[mK_1]$, where $m \geq 3$ is odd, and where $n = sm$ for some $s \geq 1$. We represent $\Aut(\G)$ just as we did in Section~\ref{sec:thegraphs} and we also let $c$, $t$, $r$ and $z$ have the same meaning as in Section~\ref{sec:thegraphs}. 

\subsection{The examples with $s$ odd}

Throughout this subsection let $s$ be odd. We first construct a non-orientable reflexible map. To this end, set
\begin{eqnarray}
\label{eq:ref_nor_s_odd}
	\sigma_1 &=& (1,1,\ldots , 1, tc^{-1})r,\nonumber \\ 
	\sigma_2 &=& (t,c^{-1},c^{3}, c^{-5}, \ldots , c^{(-1)^{i-1}(2i-3)}, \ldots, c^5, c^{-3})z,  \text{ and} \\
	G &=& \langle \sigma_1, \sigma_2 \rangle .
 \nonumber
\end{eqnarray}
Note that the assumptions that $n$ is odd and $c^n = (c^m)^s = 1$ indeed yield $c^{(-1)^{n-1}(2n-3)} = c^{-3}$.     

\begin{lemma}
\label{lemma:nonor_s_odd}
Let $\sigma_1$, $\sigma_2$ and $G$ be as in~\eqref{eq:ref_nor_s_odd}.
Then the following holds:
\begin{equation*}
|\sigma_1| = 2n, \hspace{1cm} |\sigma_2| = 2m, \hspace{1cm} |\sigma_1\sigma_2| = 2, \hspace{1cm}\text{and} \hspace{1cm} |G| = 4m^2 n.
\end{equation*}
\end{lemma}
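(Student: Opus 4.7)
My proof will follow the same overall structure as the proof of Lemma~\ref{lemma:rotary}. Write $N=S_m^n$ for the base group of $\Aut(\G)=N\rtimes D_n$. The first three equalities are obtained by direct calculation using only~\eqref{eq:rz} and the dihedral relation $tct=c^{-1}$ in $S_m$ (which yields $(tc^{-1})^2=1$ and $tc^kt=c^{-k}$). Iterating~\eqref{eq:rz} one gets $\sigma_1^n=(tc^{-1},tc^{-1},\ldots,tc^{-1})$, an involution; as $\sigma_1^k$ has $D_n$-part $r^k\ne 1$ for $0<k<n$, this gives $|\sigma_1|=2n$. The analogous computation of $\sigma_2^2$ using $z^2=1$ produces
\[
\sigma_2^2=(1,\beta_2\beta_n,\beta_3\beta_{n-1},\ldots,\beta_n\beta_2),
\]
whose $i$-th entry for $i\ge 2$ simplifies modulo $m$ to $c^{(-1)^{i-1}4(i-1)}$ (using $n$ odd and $m\mid 2n$); since $\gcd(m,4)=1$, the entry at position $2$ alone has order $m$, so $|\sigma_2|=2m$. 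Finally, $(\sigma_1\sigma_2)^2=1$ is checked directly: expanding $\sigma_1\sigma_2$ via~\eqref{eq:rz}, the identity $rz(\delta_1,\ldots,\delta_n)=(\delta_n,\ldots,\delta_1)rz$ together with $(rz)^2=1$ and the relations $\beta_j\beta_{n+3-j}=c^{(-1)^{j-1}2n}=1$ and $(tc^{-1})t=c$ make every coordinate of $(\sigma_1\sigma_2)^2$ trivial.

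The main obstacle will be computing $|G|$. Unlike in the chiral case, $\sigma_2^2$ does not normalise $\la\sigma_1\ra$, so the argument from the proof of Lemma~\ref{lemma:rotary} does not transfer. My plan is instead to analyse the kernel $K:=G\cap N$ of the projection $G\to D_n$. This projection sends $\sigma_1,\sigma_2$ to $r,z$ and is therefore surjective, so $|G|=2n\cdot|K|$ and it suffices to show $|K|=2m^2$.

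Since the $N$-components of $\sigma_1$ and $\sigma_2$ both have entries in the dihedral subgroup $D:=\la t,c\ra\le S_m$, and since $D_n$ acts on $D^n$ by coordinate permutations, the subgroup $D^n\rtimes D_n$ of $\Aut(\G)$ contains $\sigma_1,\sigma_2$, whence $G\le D^n\rtimes D_n$ and $K\le D^n$. Since $\la c\ra$ is normal in $D$ of index $2$, $T:=\la c\ra^n$ is normal in $D^n$ with $D^n/T\cong\ZZ_2^n$. The coset-profile (in $\ZZ_2^n$) of $A:=\sigma_1^n$ is $(1,\ldots,1)$ and that of $B:=\sigma_2^2$ is $(0,\ldots,0)$. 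Since conjugation by $\sigma_1,\sigma_2$ preserves the coset structure on $D$ (as $\la c\ra$ is normal in $D$) and merely shifts profile coordinates via the $D_n$-action on positions, both these profiles are fixed, so the set of profiles occurring in $K$ equals the subgroup $\{(0,\ldots,0),(1,\ldots,1)\}\le\ZZ_2^n$. Therefore $K\subseteq T\cup TA$; the two cosets are disjoint (as $A\notin T$) and both non-empty (as $1,A\in K$), giving $|K|=2|K\cap T|$.

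It remains to show $|K\cap T|=m^2$. Identifying $T\cong\ZZ_m^n$ additively via the exponents of $c$, and using $tc^et=c^{-e}$ together with the fact that the other $N$-entries of $\sigma_1,\sigma_2$ lie in $\la c\ra$ and commute with $T$, the conjugation action of $\sigma_1,\sigma_2$ on $T$ becomes the pair of $\ZZ_m$-linear maps
\[
R(e_1,\ldots,e_n)=(-e_n,e_1,\ldots,e_{n-1}),\qquad Z(e_1,\ldots,e_n)=(-e_1,e_n,e_{n-1},\ldots,e_2).
\]
Write $B'\in\ZZ_m^n$ for the exponent vector of $B$, so $B'_1=0$ and $B'_i=(-1)^{i-1}4(i-1)$ for $i\ge 2$. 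A short calculation (using $n$ odd and $m\mid n$) yields
\[
B'+R(B')\equiv 4\,(1,-1,1,\ldots,1)=:4C'\pmod{m},
\]
so, as $\gcd(4,m)=1$, the vector $C'$ lies in the $\la R,Z\ra$-submodule generated by $B'$. One then verifies directly that $R(C')=-C'$, $Z(C')=-C'$ and $Z(B')=B'$, so $\la B',C'\ra$ is already $\la R,Z\ra$-stable and therefore coincides with $K\cap T$. Finally, comparing positions $1$ and $2$ of $B'$ and $C'$ (and again using $\gcd(4,m)=1$) shows that these two vectors are $\ZZ_m$-linearly independent, so $|K\cap T|=m^2$, $|K|=2m^2$, and $|G|=4m^2n$.
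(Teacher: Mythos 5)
Your verification of $|\sigma_1|=2n$, $|\sigma_2|=2m$ and $(\sigma_1\sigma_2)^2=1$ is correct and matches the paper's computations. The problem is the order of $G$: each of the two key equalities in your argument is justified only in the direction that gives a \emph{lower} bound, and the missing direction is exactly the upper bound $|G|\le 4m^2n$. First, you claim that the set of profiles of elements of $K=G\cap N$ in $\ZZ_2^n$ is exactly $\{(0,\dots,0),(1,\dots,1)\}$, but your justification (the profiles of $A=\sigma_1^n$ and $B=\sigma_2^2$ are constant vectors, hence fixed by the coordinate action) only shows that these two profiles occur; it does not exclude others. To exclude them you need to know that every element of $K$ is a product of conjugates of $A^{\pm1},B^{\pm1}$, i.e.\ that $K$ is the normal closure of $\{\sigma_1^n,\sigma_2^2\}$ in $G$ --- which is true, but only because $(\sigma_1\sigma_2)^2=1$ kills the third relator of $D_n=\la r,z\mid r^n,z^2,(rz)^2\ra$ --- or, alternatively, a direct check that the image of $G$ in $\ZZ_2\wr D_n$ has order $4n$ (its generators are $(e_n;r)$ of order $2n$ and, since every entry of $\sigma_1\sigma_2$ lies in $\la c\ra$, the involution $(0;rz)$, which inverts $(e_n;r)$). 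Neither argument appears in your write-up. Second, ``$\la B',C'\ra$ is already $\la R,Z\ra$-stable and therefore coincides with $K\cap T$'' is a non sequitur: stability together with $B'\in K\cap T$ gives only $\la B',C'\ra\subseteq K\cap T$, i.e.\ $|K\cap T|\ge m^2$; the reverse inclusion, which is what you actually need, is never addressed. Even granting the normal-closure description of $K$, elements of $K\cap T$ also arise as products of two conjugates of $A$, so you must additionally verify, for instance, that $A\cdot\sigma_2^{-1}A\sigma_2$ lies in $\la B',C'\ra$ (it does: its exponent vector is $B'+2C'$), from which $\la B',C'\ra\la A\ra$ is normal in $G$ and hence equals $K$. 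As written, your argument proves only $|G|\ge 4m^2n$, and there is no independent a priori upper bound on $|G|$ available at this point of the paper.

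It is worth noting how the paper sidesteps this difficulty: its $H_1=\la\varphi,\psi\ra$ is precisely your submodule $\la B',C'\ra$ (since $\psi=R(B')$ and $B'+R(B')=4C'$ with $4$ invertible mod $m$), but instead of trying to compute $G\cap N$ directly it verifies that $H_1$ and then $K_1=\la\sigma_1\ra H_1$ are normal in $G$, so that $\sigma_2^2\in K_1$ forces $[G:K_1]\le 2$; normality is what delivers the upper bound automatically. If you repair your proof by adding the normal-closure step (using $(\sigma_1\sigma_2)^2=1$) and the computation that the $\sigma_2$-conjugate of $A$ differs from $A$ by an element of $\la B',C'\ra$, you essentially reconstruct the paper's normality argument in additive language.
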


\begin{proof}
We first note that 
\begin{equation}
\label{eq:nonor_s1}
	\sigma_1^n = (tc^{-1},tc^{-1},\ldots , tc^{-1})1,
\end{equation}
implying that $|\sigma_1| = 2n$. Similarly, 
$$
	\sigma_2^2 = (t,c^{-1},\ldots , c^{(-1)^{i-1}(2i-3)}, \ldots, c^{-3})(t,c^{-3},\ldots , c^{(-1)^{n-i+1}(2n-2i+1)}, \ldots , c^{-1})1,
$$
and so setting $\varphi = \sigma_2^2$ we see that 
\begin{equation}
\label{eq:nonor_varphi}
	 \varphi = (1, c^{-4}, c^8, \ldots , c^{(-1)^{i-1}4(i-1)}, \ldots, c^8, c^{-4})1.
\end{equation}
This of course implies that $|\sigma_2| = 2m$, as claimed. Next, note that
\begin{equation}
\label{eq:reflex_s1s2_s_odd}
	\sigma_1\sigma_2 = (c^{-1}, c^3, \ldots , c^{(-1)^i(2i-1)}, \ldots,  c^{-3}, c)rz,
\end{equation}
from which it is easy to verify that $\sigma_1\sigma_2$ is indeed an involution. 

Let $\psi = \sigma_1^{-1}\varphi \sigma_1$ and note that 
$$
	\psi = (tc^{-1},1,\ldots, 1)(c^{-4},1,c^{-4},c^8,\ldots , c^{(-1)^{i-2}4(i-2)}, \ldots, c^8)(tc^{-1},1,\ldots , 1)1,
$$
that is
\begin{equation}
\label{eq:nonor_psi}
	\psi = (c^{4},1,c^{-4},\ldots , c^{(-1)^{i}4(i-2)}, \ldots, c^8)1.
\end{equation}
Since $\varphi$ and $\psi$ commute and are both of order $m$ (recall that $m$ is odd), the group $H_1 = \langle \varphi, \psi \rangle \cong C_m \times C_m$ is abelian and is of order $m^2$. In a completely analogous way as we computed $\psi$ we can verify that 
$$
	\sigma_1^{-1}\psi \sigma_1 = (c^{-8},c^4,1,\ldots ,c^{(-1)^{i+1}4(i-3)}, \ldots,  c^{-12})1.
$$
It follows that $\sigma_1^{-1}\psi \sigma_1 = \varphi^{-1}\psi^{-2} \in H_1$, which implies that $\sigma_1$ normalizes $H_1$. Similarly, $\sigma_2$ commutes with $\varphi$ and since (recall that $\sigma_1\sigma_2$ is an involution)
\begin{equation}
\label{eq:sig2psi}
	\sigma_2^{-1}\psi \sigma_2 = \sigma_2^{-1}\sigma_1^{-1}\sigma_2^2\sigma_1\sigma_2 = \sigma_1\sigma_2^2\sigma_1^{-1} = \sigma_1\varphi\sigma_1^{-1}
\end{equation}
and $\sigma_1$ normalizes $H_1$, we see that in fact $\sigma_2$ also normalizes $H_1$, proving that $H_1$ is normal in $G$. Clearly, $\langle \sigma_1 \rangle \cap H_1 = 1$, and so $K_1 = \langle \sigma_1, \varphi, \psi \rangle$ is a group of order $2m^2 n$. Now, $\sigma_1$ normalizes $K_1$, and so the fact that 
\begin{equation}
\label{eq:sig2sig1_conj}
	\sigma_2^{-1}\sigma_1\sigma_2 = \sigma_2^{-1}\sigma_2^{-1}\sigma_1^{-1} = \varphi^{-1}\sigma_1^{-1} \in K_1
\end{equation}
implies that $K_1$ is also a normal subgroup of $G$. Since $\sigma_2 \notin K_1$ and $\sigma_2^2 \in K_1$, this finally proves that $G$ is of order $4m^2 n$, as claimed.
\end{proof}

\begin{proposition}
\label{pro:nonor_s_odd}
Let $s$ and $m$ be odd integers with $m\geq 3$.
  Let $\sigma_1$, $\sigma_2$ and $G$ be as in~\eqref{eq:ref_nor_s_odd}. Then $\G=C_n[mK_1]$ is the skeleton of a polytopal non-orientable reflexible map $\m$ of type $\{2n, 2m\}$ with $\Aut(\m) = \Aut^+(\m)=G$. 
\end{proposition}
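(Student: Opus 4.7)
The plan is to invoke Theorem~\ref{prop:undergraph}(b) with the elements $\sigma_1,\sigma_2$ and the group $G$ of \eqref{eq:ref_nor_s_odd}. Lemma~\ref{lemma:nonor_s_odd} already supplies the algebraic data $|\sigma_1|=2n$, $|\sigma_2|=2m$, $(\sigma_1\sigma_2)^2=1$ and $|G|=4m^2n$; what remains is to verify the graph-theoretic hypotheses of Theorem~\ref{prop:undergraph} and to exhibit a reflecting involution in $G$.

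Setting $v=(1,1)$, the defining formulae together with \eqref{eq:rz} give $v\sigma_2=v$ at once, and $v\sigma_1=(2,1)$ is plainly a neighbour of $v$. Iterating $\sigma_2$ from $(2,1)$, the first coordinate alternates between $2$ and $n$ while the second coordinate is shifted by $c^{-1}$ and $c^{-3}$ (the entries in positions $1$ and $2$ of the tuple defining $\sigma_2$); since $\gcd(m,4)=1$ these shifts generate $C_m$ on each layer, so the orbit exhausts all $2m$ neighbours of $v$. Hence conditions (i)--(iii) of Theorem~\ref{prop:undergraph} are met. Theorem~\ref{prop:undergraph} then yields arc-transitivity of $G$ on $\Gamma$, and since $\Gamma$ has $2m^2n$ arcs while $|G|=4m^2n$, the arc stabiliser $\Stab_G(vw)$ has order $2$.

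The heart of the argument is to find an element $\rho\in G$ with $v\rho=v$ and $\sigma_1^\rho=\sigma_1^{-1}$. My candidate is
$$\rho = (1,\,tc^{-1},\,tc^{-1},\,\ldots,\,tc^{-1})\,z,$$
with $tc^{-1}$ in positions $2$ through $n$. Using \eqref{eq:rz} and the identities $(tc^{-1})^2=1$ and $tc^{-1}\cdot t=c$ (both immediate from $tct=c^{-1}$), a short coordinate-wise check establishes $\rho^2=1$, $v\rho=v$ and $\sigma_1\rho=\rho\sigma_1^{-1}$. The principal obstacle, and the only part that is not routine, is to verify that $\rho$ actually lies in $G$: I would do this by computing $\sigma_2^{-1}\rho$, whose $D_n$-projection is trivial and which therefore lies in $S_m^n$, and then realising it as an explicit word in $\sigma_1^n$, $\varphi=\sigma_2^2$ and $\psi=\sigma_1^{-1}\varphi\sigma_1$, the generators of the index-two subgroup $K_1=\langle\sigma_1,\sigma_2^2\rangle$ produced in Lemma~\ref{lemma:nonor_s_odd}. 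The computation is coordinate-wise bookkeeping in $S_m$ using the alternating patterns of powers of $c$ that shape $\sigma_2$, $\varphi$ and $\psi$, reduced modulo $c^m=1$.

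With $\rho\in G$ in hand, Theorem~\ref{prop:undergraph}(b) immediately yields that $\Gamma$ is the skeleton of a polytopal non-orientable reflexible map $\mathcal{M}$ with $\Aut(\mathcal{M})=\Aut^+(\mathcal{M})=G$, and the type $\{2n,2m\}$ is read off from the orders $|\sigma_1|=2n$ and $|\sigma_2|=2m$.
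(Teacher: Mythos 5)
Your proposal is correct and follows essentially the same route as the paper: the same verification of conditions (i)--(iii) of Theorem~\ref{prop:undergraph} at $v=(1,1)$, the same counting argument via Lemma~\ref{lemma:nonor_s_odd} giving $|\Stab_G(vw)|=2$, and the very same reflector $\rho=(1,tc^{-1},tc^{-1},\ldots,tc^{-1})z$, with membership in $G$ established through the index-two subgroup $K_1=\langle\sigma_1,\varphi,\psi\rangle=\langle\sigma_1,\sigma_2^2\rangle$. The one step you leave as ``coordinate-wise bookkeeping'' does go through exactly as you predict --- the paper's explicit word is $\rho=\sigma_1^n\varphi^{(m-1)(m+1)/4}\psi^{(m+1)^2/4}\sigma_2$, the required exponents existing because $4$ is invertible modulo the odd $m$ --- though note the small slip that the shifts $c^{-1}$ and $c^{-3}$ are the entries of $\sigma_2$ in positions $2$ and $n$, not $1$ and $2$.
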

\begin{proof}
Let $v:=(1,1)$ and observe that $\sigma_1$ maps $v$ to the neighbour $w:=(2,1)$ of $v$. In fact,  the orbit of $v$ under $\langle \sigma_1 \rangle$ constitutes the cycle $f$ of length $2n$
 with vertices $(1,1), (2,1),$ $ (3,1),$ $\ldots,$ $(n,1), (1,m),$ $(2,m), \ldots, (n,m)$, in that order (see Figure~\ref{fig:non-oreintable-sodd-face}).

\begin{figure}
    \centering
    \newrgbcolor{ududff}{0.30196078431372547 0.30196078431372547 1.}
\newrgbcolor{qqwuqq}{0. 0.39215686274509803 0.}
\psset{xunit=1.5cm,yunit=1.5cm,algebraic=true,dimen=middle,dotstyle=o,dotsize=5pt 0,linewidth=1.6pt,arrowsize=3pt 2,arrowinset=0.25}
\begin{pspicture*}(2.583761845572507,-5.7281201013688685)(13.783332719667442,-2.3080360925054935)
\psline[linewidth=2.pt,linecolor=qqwuqq](4.,-3.)(5.,-3.)
\psline[linewidth=1.pt](5.,-3.)(6.,-3.)
\psline[linewidth=2.pt,linecolor=qqwuqq](6.,-3.)(7.,-3.)
\psline[linewidth=2.pt,linecolor=qqwuqq](7.,-3.)(8.,-3.)
\psline[linewidth=2.pt,linecolor=qqwuqq](8.,-3.)(9.,-3.)
\psline[linewidth=2.pt,linecolor=qqwuqq](4.,-3.)(3.4813532062736874,-3.0030743857047866)
\psline[linewidth=1.pt](4.,-4.)(5.,-4.)
\psline[linewidth=1.pt](4.,-4.)(3.4813532062736874,-4.003074385704787)
\psline[linewidth=1.pt](5.,-4.)(6.,-4.)
\psline[linewidth=1.pt](6.,-4.)(7.,-4.)
\psline[linewidth=1.pt](7.,-4.)(8.,-4.)
\psline[linewidth=1.pt](8.,-4.)(9.,-4.)
\psline[linewidth=2.pt,linecolor=qqwuqq](4.,-5.)(3.4813532062736874,-5.003074385704787)
\psline[linewidth=2.pt,linecolor=qqwuqq](4.,-5.)(5.,-5.)
\psline[linewidth=1.pt](5.,-5.)(6.,-5.)
\psline[linewidth=2.pt,linecolor=qqwuqq](6.,-5.)(7.,-5.)
\psline[linewidth=2.pt,linecolor=qqwuqq](7.,-5.)(8.,-5.)
\psline[linewidth=2.pt,linecolor=qqwuqq](8.,-5.)(9.,-5.)
\psline[linewidth=1.pt](4.,-3.)(3.5047403023605423,-3.4591227593984533)
\psline[linewidth=1.pt](4.,-4.)(3.4930467543171146,-3.657913076136718)
\psline[linewidth=1.pt](4.,-3.)(3.457966110186833,-3.9268646811355468)
\psline[linewidth=1.pt](4.,-4.)(3.5047403023605423,-4.459122759398453)
\psline[linewidth=1.pt](4.,-5.)(3.4930467543171146,-4.657913076136718)
\psline[linewidth=1.pt](4.,-3.)(5.,-4.)
\psline[linewidth=1.pt](5.,-4.)(6.,-3.)
\psline[linewidth=1.pt](6.,-3.)(7.,-4.)
\psline[linewidth=1.pt](7.,-4.)(8.,-3.)
\psline[linewidth=1.pt](8.,-3.)(9.,-4.)
\psline[linewidth=1.pt](9.,-4.)(8.,-5.)
\psline[linewidth=1.pt](8.,-5.)(7.,-4.)
\psline[linewidth=1.pt](7.,-4.)(6.,-5.)
\psline[linewidth=1.pt](6.,-5.)(5.,-4.)
\psline[linewidth=1.pt](5.,-4.)(4.,-5.)
\psline[linewidth=1.pt](4.,-5.)(5.,-3.)
\psline[linewidth=2.pt,linecolor=qqwuqq](5.,-3.)(6.,-5.)
\psline[linewidth=1.pt](6.,-5.)(7.,-3.)
\psline[linewidth=1.pt](7.,-3.)(8.,-5.)
\psline[linewidth=1.pt](8.,-5.)(9.,-3.)
\psline[linewidth=1.pt](9.,-3.)(8.,-4.)
\psline[linewidth=1.pt](8.,-4.)(7.,-3.)
\psline[linewidth=1.pt](7.,-3.)(6.,-4.)
\psline[linewidth=1.pt](6.,-4.)(5.,-3.)
\psline[linewidth=1.pt](5.,-3.)(4.,-4.)
\psline[linewidth=1.pt](4.,-4.)(5.,-5.)
\psline[linewidth=1.pt](5.,-5.)(6.,-4.)
\psline[linewidth=1.pt](6.,-4.)(7.,-5.)
\psline[linewidth=1.pt](7.,-5.)(8.,-4.)
\psline[linewidth=1.pt](8.,-4.)(9.,-5.)
\psline[linewidth=1.pt](9.,-5.)(8.,-3.)
\psline[linewidth=1.pt](8.,-3.)(7.,-5.)
\psline[linewidth=1.pt](7.,-5.)(6.,-3.)
\psline[linewidth=2.pt,linecolor=qqwuqq](6.,-3.)(5.,-5.)
\psline[linewidth=1.pt](5.,-5.)(4.,-3.)
\rput[tl](5.787079966550198,-2.62114237500707){(1,1)}
\rput[tl](5.787079966550198,-4.198716336841937){(1,2)}
\rput[tl](5.8232076145311495,-5.186205381654601){(1,3)}
\rput[tl](6.786611560689854,-2.6090998256800866){(2,1)}
\psline[linewidth=1.pt](4.,-5.)(3.457966110186833,-4.073135318864454)
\psline[linewidth=2.pt,linecolor=qqwuqq](9.,-3.)(10.,-3.)
\psline[linewidth=2.pt,linecolor=qqwuqq](10.,-3.)(11.,-3.)
\psline[linewidth=2.pt,linecolor=qqwuqq](11.,-3.)(12.,-3.)
\psline[linewidth=1.pt](9.,-3.)(10.,-4.)
\psline[linewidth=1.pt](10.,-3.)(11.,-4.)
\psline[linewidth=1.pt](11.,-3.)(12.,-4.)
\psline[linewidth=1.pt](10.,-3.)(9.,-4.)
\psline[linewidth=1.pt](11.,-3.)(10.,-4.)
\psline[linewidth=1.pt](12.,-3.)(11.,-4.)
\psline[linewidth=1.pt](10.,-5.)(9.,-3.)
\psline[linewidth=1.pt](11.,-5.)(10.,-3.)
\psline[linewidth=1.pt](12.,-5.)(11.,-3.)
\psline[linewidth=1.pt](9.,-5.)(10.,-3.)
\psline[linewidth=1.pt](10.,-5.)(11.,-3.)
\psline[linewidth=1.pt](11.,-5.)(12.,-3.)
\psline[linewidth=1.pt](9.,-4.)(10.,-4.)
\psline[linewidth=1.pt](10.,-4.)(11.,-4.)
\psline[linewidth=1.pt](11.,-4.)(12.,-4.)
\psline[linewidth=2.pt,linecolor=qqwuqq](9.,-5.)(10.,-5.)
\psline[linewidth=2.pt,linecolor=qqwuqq](10.,-5.)(11.,-5.)
\psline[linewidth=2.pt,linecolor=qqwuqq](11.,-5.)(12.,-5.)
\psline[linewidth=1.pt](9.,-5.)(10.,-3.)
\psline[linewidth=1.pt](9.,-4.)(10.,-5.)
\psline[linewidth=1.pt](10.,-4.)(11.,-5.)
\psline[linewidth=1.pt](11.,-4.)(12.,-5.)
\psline[linewidth=1.pt](10.,-4.)(9.,-5.)
\psline[linewidth=1.pt](11.,-4.)(10.,-5.)
\psline[linewidth=1.pt](12.,-4.)(11.,-5.)
\psline[linewidth=2.pt,linecolor=qqwuqq](12.,-3.)(12.518646793726312,-3.003074385704786)
\psline[linewidth=1.pt](12.,-3.)(12.495259697639458,-3.459122759398453)
\psline[linewidth=1.pt](12.,-3.)(12.542033889813169,-3.9268646811355463)
\psline[linewidth=1.pt](12.,-4.)(12.506953245682887,-3.6579130761367176)
\psline[linewidth=1.pt](12.,-4.)(12.518646793726314,-4.003074385704786)
\psline[linewidth=1.pt](12.,-5.)(12.542033889813169,-4.073135318864453)
\psline[linewidth=1.pt](12.,-5.)(12.506953245682887,-4.657913076136717)
\psline[linewidth=2.pt,linecolor=qqwuqq](12.,-5.)(12.518646793726314,-5.003074385704786)
\begin{scriptsize}
\psdots[dotstyle=*](4.,-3.)
\psdots[dotstyle=*](5.,-3.)
\psdots[dotstyle=*](6.,-3.)
\psdots[dotstyle=*](7.,-3.)
\psdots[dotstyle=*](8.,-3.)
\psdots[dotstyle=*](9.,-3.)
\psdots[dotstyle=*](4.,-4.)
\psdots[dotstyle=*](4.,-4.)
\psdots[dotstyle=*](5.,-4.)
\psdots[dotstyle=*](4.,-4.)
\psdots[dotstyle=*](5.,-4.)
\psdots[dotstyle=*](6.,-4.)
\psdots[dotstyle=*](6.,-4.)
\psdots[dotstyle=*](7.,-4.)
\psdots[dotstyle=*](7.,-3.)
\psdots[dotstyle=*](8.,-3.)
\psdots[dotstyle=*](7.,-3.)
\psdots[dotstyle=*](8.,-3.)
\psdots[dotstyle=*](7.,-4.)
\psdots[dotstyle=*](8.,-4.)
\psdots[dotstyle=*](8.,-4.)
\psdots[dotstyle=*](9.,-4.)
\psdots[dotstyle=*](9.,-4.)
\psdots[dotstyle=*](4.,-5.)
\psdots[dotstyle=*](4.,-5.)
\psdots[dotstyle=*](5.,-5.)
\psdots[dotstyle=*](5.,-5.)
\psdots[dotstyle=*](6.,-5.)
\psdots[dotstyle=*](6.,-5.)
\psdots[dotstyle=*](7.,-5.)
\psdots[dotstyle=*](7.,-5.)
\psdots[dotstyle=*](8.,-5.)
\psdots[dotstyle=*](8.,-5.)
\psdots[dotstyle=*](9.,-5.)
\psdots[dotstyle=*](9.,-5.)
\psdots[dotstyle=*](4.,-4.)
\psdots[dotstyle=*](4.,-4.)
\psdots[dotstyle=*](4.,-4.)
\psdots[dotstyle=*](4.,-4.)
\psdots[dotstyle=*](4.,-5.)
\psdots[dotstyle=*](9.,-3.)
\psdots[dotstyle=*](9.,-3.)
\psdots[dotstyle=*](9.,-4.)
\psdots[dotstyle=*](9.,-4.)
\psdots[dotstyle=*](9.,-5.)
\psdots[dotstyle=*](9.,-4.)
\psdots[dotstyle=*](4.,-5.)
\psdots[dotstyle=*](10.,-3.)
\psdots[dotstyle=*](10.,-4.)
\psdots[dotstyle=*](10.,-5.)
\psdots[dotstyle=*](11.,-3.)
\psdots[dotstyle=*](11.,-4.)
\psdots[dotstyle=*](11.,-5.)
\psdots[dotstyle=*](12.,-3.)
\psdots[dotstyle=*](12.,-4.)
\psdots[dotstyle=*](12.,-5.)
\psdots[dotstyle=*](9.,-3.)
\psdots[dotstyle=*](10.,-3.)
\psdots[dotstyle=*](10.,-3.)
\psdots[dotstyle=*](11.,-3.)
\psdots[dotstyle=*](11.,-3.)
\psdots[dotstyle=*](12.,-3.)
\psdots[dotstyle=*](9.,-3.)
\psdots[dotstyle=*](10.,-4.)
\psdots[dotstyle=*](10.,-3.)
\psdots[dotstyle=*](11.,-4.)
\psdots[dotstyle=*](11.,-3.)
\psdots[dotstyle=*](12.,-4.)
\psdots[dotstyle=*](10.,-3.)
\psdots[dotstyle=*](9.,-4.)
\psdots[dotstyle=*](11.,-3.)
\psdots[dotstyle=*](10.,-4.)
\psdots[dotstyle=*](12.,-3.)
\psdots[dotstyle=*](11.,-4.)
\psdots[dotstyle=*](10.,-5.)
\psdots[dotstyle=*](9.,-3.)
\psdots[dotstyle=*](11.,-5.)
\psdots[dotstyle=*](10.,-3.)
\psdots[dotstyle=*](12.,-5.)
\psdots[dotstyle=*](11.,-3.)
\psdots[dotstyle=*](9.,-5.)
\psdots[dotstyle=*](10.,-3.)
\psdots[dotstyle=*](10.,-5.)
\psdots[dotstyle=*](11.,-3.)
\psdots[dotstyle=*](11.,-5.)
\psdots[dotstyle=*](12.,-3.)
\psdots[dotstyle=*](9.,-4.)
\psdots[dotstyle=*](10.,-4.)
\psdots[dotstyle=*](10.,-4.)
\psdots[dotstyle=*](11.,-4.)
\psdots[dotstyle=*](11.,-4.)
\psdots[dotstyle=*](12.,-4.)
\psdots[dotstyle=*](9.,-5.)
\psdots[dotstyle=*](10.,-5.)
\psdots[dotstyle=*](10.,-5.)
\psdots[dotstyle=*](11.,-5.)
\psdots[dotstyle=*](11.,-5.)
\psdots[dotstyle=*](12.,-5.)
\psdots[dotstyle=*](9.,-5.)
\psdots[dotstyle=*](10.,-3.)
\psdots[dotstyle=*](9.,-4.)
\psdots[dotstyle=*](10.,-5.)
\psdots[dotstyle=*](10.,-4.)
\psdots[dotstyle=*](11.,-5.)
\psdots[dotstyle=*](11.,-4.)
\psdots[dotstyle=*](12.,-5.)
\psdots[dotstyle=*](10.,-4.)
\psdots[dotstyle=*](9.,-5.)
\psdots[dotstyle=*](11.,-4.)
\psdots[dotstyle=*](10.,-5.)
\psdots[dotstyle=*](12.,-4.)
\psdots[dotstyle=*](11.,-5.)
\psdots[dotstyle=*](12.,-3.)
\psdots[dotstyle=*](12.,-3.)
\psdots[dotstyle=*](12.,-3.)
\psdots[dotstyle=*](12.,-4.)
\psdots[dotstyle=*](12.,-4.)
\psdots[dotstyle=*](12.,-5.)
\psdots[dotstyle=*](12.,-5.)
\psdots[dotstyle=*](12.,-5.)
\end{scriptsize}
\end{pspicture*}
  \vspace{-.5cm}  \caption{The graph $C_9[3K_1]$ and the base face of the reflexible non-orientable map constructed in Proposition~\ref{pro:nonor_s_odd}.}
    \label{fig:non-oreintable-sodd-face}
\end{figure}
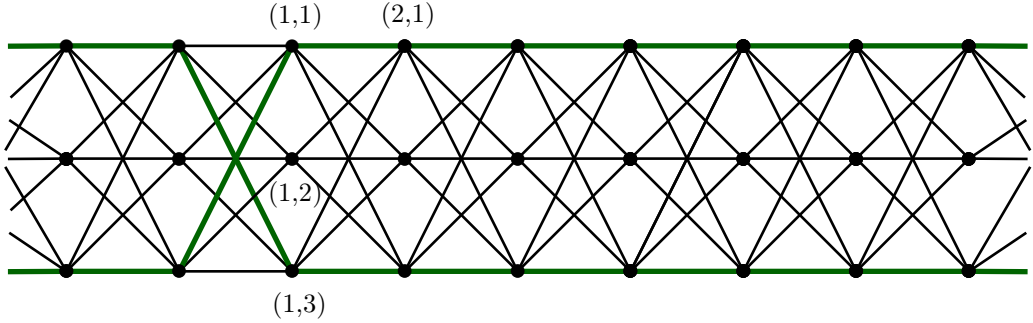

Moreover, $v \sigma_2= (1z,1t)=(1,1) = v$, while for each $j$ and $k$ with $1\leq j,k\leq m$ we see that
 $(2,j)\sigma_2 = (n,j-1)$ and $(n,k)\sigma_2 = (2,k-3)$, implying that $\sigma_2$ stabilises $v$ and cyclically permutes the vertices $(2,1), (n,m), (2,m-3), (n,m-4), \dots, (n,4)$.
  Since $m$ is odd,  this implies that $\langle \sigma_2\rangle$ acts transitively on the neighbourhood of $v$.
We can thus use Theorem~\ref{prop:undergraph} to deduce that $G$ acts transitively on the arcs of $\Gamma$.

Moreover, by Lemma~\ref{lemma:nonor_s_odd}, the order of $G$ is twice the number of arcs in $\G$, implying that $|\mathrm{Stab}_{G}(uw)| = 2$.
Let us now find an involution $\rho\in G$ fixing $v$ and inverting $\sigma_1$; the result will then follow by  Theorem~\ref{prop:undergraph}.

Note that $c^{(m-1)(m+1)} = c^{-1}$ and $c^{(m+1)^2}=c$. Therefore, defining $\varphi, \psi$ as in the proof of Lemma~\ref{lemma:nonor_s_odd},  one can verify that~\eqref{eq:nonor_varphi} and~\eqref{eq:nonor_psi} imply that
$$ 
\varphi^{(m-1)(m+1)/4}\psi^{(m+1)^2/4} = (c,c,c^{-3},c^5,\ldots , c^{(-1)^{i-1}(3-2i)}, \ldots , c^3)1.
$$
Thus~\eqref{eq:nonor_s1} implies that setting $\rho = \sigma_1^n\varphi^{(m-1)(m+1)/4}\psi^{(m+1)^2/4}\sigma_2$ we have $\rho \in G$ and 
\begin{equation}
\label{eq:reflex_tau_s_odd} 
	\rho = (1,tc^{-1},tc^{-1},\ldots , tc^{-1})z.
\end{equation}
Observe that $(1,1)\rho = (1,1)$ and that $\rho$ is an involution. Since
\begin{equation}
\label{eq:tau_sigma_1}
	\rho \sigma_1 = (1,tc^{-1},tc^{-1},\ldots , tc^{-1})(1,tc^{-1},1,\ldots , 1)zr = (1,1,tc^{-1},\ldots,tc^{-1})zr,
\end{equation}
it follows that $\rho\sigma_1$ is also an involution, showing that $\rho$ inverts $\sigma_1$, as desired.
\end{proof}

Let us now use the Petrie dual construction described in Lemma~\ref{lem:Petrie} to deduce existence of
an orientable reflexible map with the skeleton $\G$.
Let $\sigma_1$, $\sigma_2$ be as in~\eqref{eq:ref_nor_s_odd}, and let $\rho$ be as in~\eqref{eq:reflex_tau_s_odd}. Let 
\begin{equation}
\label{eq:ref_ori_s_odd}
    \eta_1 = \sigma_1\sigma_2\rho, \hspace{1cm} \eta_2 = \sigma_2 \hspace{1cm} \text{and}  \hspace{1cm} H = \langle \eta_1, \eta_2\rangle.
\end{equation}

\begin{proposition}
\label{pro:reflex_or_s_odd}
 Let $s$ and $m$ be odd integers with $m \geq 3$, let $n = sm$ and
  let $\eta_1$, $\eta_2$ and $H$ be as in~\eqref{eq:ref_ori_s_odd}.
  Then $\G = C_n[mK_1]$ is the skeleton of a polytopal orientable reflexible map $\m$ of type $\{n, 2m\}$ with $\Aut^+(\m) = H$.
\end{proposition}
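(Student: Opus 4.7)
The plan is to apply Lemma~\ref{lem:Petrie} to the non-orientable reflexible map $\m_0$ supplied by Proposition~\ref{pro:nonor_s_odd}, whose rotations $\sigma_1,\sigma_2$ and reflection $\rho$ of \eqref{eq:reflex_tau_s_odd} are known explicitly. That lemma automatically produces a reflexible polytopal map $\m=\m_0^\pi$ with skeleton $\G$ having $\eta_1,\eta_2$ as distinguished generators of $\Aut^+(\m)=H$, and declares $\m$ to be orientable precisely when $\rho\notin H$. So after invoking the lemma, two things remain: identify the type of $\m$ as $\{n,2m\}$ and verify $\rho\notin H$.

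By Theorem~\ref{prop:undergraph} the type equals $\{|\eta_1|,|\eta_2|\}$, and $|\eta_2|=|\sigma_2|=2m$ is immediate from Lemma~\ref{lemma:nonor_s_odd}. For $|\eta_1|$, I would expand $\eta_1=\sigma_1\sigma_2\rho$ using \eqref{eq:reflex_s1s2_s_odd} and \eqref{eq:reflex_tau_s_odd} together with the dihedral identity $c^kt=tc^{-k}$, obtaining $\eta_1=(x_1,\ldots,x_n)r$ with $x_1=t$, $x_n=c$, $x_i=tc^{2i-2}$ for odd $i$ with $1<i<n$, and $x_i=tc^{-2i}$ for even $i<n$. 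Since $((y_1,\ldots,y_n)r)^n=(X_1,\ldots,X_n)\cdot 1$ where $X_j=y_jy_{j+1}\cdots y_{j+n-1}$ (indices mod $n$) and the $X_j$ are all conjugate in $S_m$, it suffices to show $X_1=x_1x_2\cdots x_n=1$. Pairing $x_{2k}x_{2k+1}=tc^{-4k}\cdot tc^{4k}=c^{8k}$ for $k=1,\ldots,(n-3)/2$ together with the tail $x_{n-1}x_n=tc^{-2(n-1)}\cdot c=tc^3$ (using $c^{2n}=1$, which holds since $m\mid n$) yields $X_1=t\cdot c^{(n-3)(n-1)}\cdot tc^3$; as $(n-3)(n-1)\equiv 3\pmod{m}$, this collapses to $(tc^3)(tc^3)=1$ because $tc^k$ is an involution in the dihedral subgroup $\langle c,t\rangle\leq S_m$. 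Since $\eta_1^k$ has $D_n$-component $r^k\neq 1$ for $0<k<n$, the order is exactly $n$.

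To establish $\rho\notin H$, I would pass to the quotient $G/N$ where $N=\langle\varphi,\psi\rangle\cong\ZZ_m\times\ZZ_m$ is the normal subgroup of $G$ highlighted in the proof of Lemma~\ref{lemma:nonor_s_odd}. There $\sigma_2^2=\varphi\in N$ and $\sigma_2^{-1}\sigma_1\sigma_2=\varphi^{-1}\sigma_1^{-1}\equiv\sigma_1^{-1}\pmod{N}$, while the order of $\sigma_1$ modulo $N$ is still $2n$ (any $\sigma_1^k\in N\subseteq S_m^n$ forces $r^k=1$, hence $n\mid k$, but $\sigma_1^n=(tc^{-1},\ldots,tc^{-1})$ has coordinates outside $\langle c\rangle$ and so outside $N$), giving $G/N\cong D_{2n}$. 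Modulo $N$ one has $\rho\equiv\sigma_1^n\sigma_2$ and hence $\eta_1\equiv\sigma_1\sigma_2\cdot\sigma_1^n\sigma_2\equiv\sigma_1^{1-n}=\sigma_1^{n+1}$; since $n$ is odd, $\gcd(n+1,2n)=2$ so $\langle\sigma_1^{n+1}\rangle=\langle\sigma_1^2\rangle$, and the image of $H$ in $D_{2n}$ is the index-two subgroup $\langle\sigma_1^2,\sigma_2\rangle$. This subgroup does not contain $\sigma_1^n\sigma_2$ because $n$ is odd, so $\rho\notin H$. The main obstacle is the telescoping calculation giving $X_1=1$; once that and the short quotient argument are in hand, every claim of the proposition follows from Lemma~\ref{lem:Petrie} combined with Theorem~\ref{prop:undergraph}.
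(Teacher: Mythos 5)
Your proposal is correct, and its first half---invoking Lemma~\ref{lem:Petrie} together with Proposition~\ref{pro:nonor_s_odd} and then showing $|\eta_1|=n$ by expanding $\eta_1=\sigma_1\sigma_2\rho$ coordinatewise and checking that the ordered product of the $S_m$-coordinates is trivial---is essentially the paper's argument (the paper brackets the coordinates as $x_1x_2,\,x_3x_4,\dots$ and obtains $c^{n(2-n)}=1$, while you pair $x_{2k}x_{2k+1}$ and telescope to $(tc^3)^2=1$; same computation, different grouping). Where you genuinely diverge is the orientability step. The paper proves $|H|=2m^2n$ by exhibiting the index-two normal subgroup $K_2=\langle\,\eta_2^2,\ \eta_1^{-1}\eta_2^2\eta_1,\ \eta_1\,\rangle$ built on the same subgroup $H_1=\langle\varphi,\psi\rangle\cong C_m\times C_m$, and uses that $\rho\in H$ would force $H=G$ of order $4m^2n$; you instead pass to the quotient $G/H_1$, a dihedral group of order $4n$, where $\bar\rho=\bar\sigma_1^{\,n}\bar\sigma_2$ and $\bar\eta_1=\bar\sigma_1^{\,n+1}$, so the image of $H$ is $\langle\bar\sigma_1^{\,2},\bar\sigma_2\rangle$ and misses $\bar\rho$ because $n$ is odd, whence $\rho\notin H$ and Lemma~\ref{lem:Petrie} gives orientability. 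Both routes rest on the normality of $H_1$ established in Lemma~\ref{lemma:nonor_s_odd}; yours settles the membership question $\rho\in H$ directly and makes the role of the parity of $n$ transparent, while the paper's yields the exact order $|H|=2m^2n$ (regularity of $H$ on arcs) as a by-product. The intermediate identities you rely on (the coordinates of $\eta_1$, the fact that $\bar\sigma_1$ still has order $2n$ modulo $H_1$ because $H_1\le\langle c\rangle^n$, the congruence $\rho\equiv\sigma_1^n\sigma_2$ modulo $H_1$, and $\gcd(n+1,2n)=2$) all check out.
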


\begin{proof}
By Lemma~\ref{lem:Petrie} and Proposition~\ref{pro:nonor_s_odd}, we already know that $\G$ is the skeleton of a reflexible map $\m^\pi$ with $\Aut^+(\m^\pi) = H$ with the distinguished generators of a base flag
being $\eta_1$ and $\eta_2$. 
To conclude the proof we only need to prove that $|\eta_1| = n$ and that $\m^\pi$ is orientable. 

Let $\sigma_1$ and $\sigma_2$ be as in~\eqref{eq:ref_nor_s_odd}, and let $\rho$ be as in~\eqref{eq:reflex_tau_s_odd}.
From~\eqref{eq:reflex_s1s2_s_odd},~\eqref{eq:reflex_tau_s_odd} and~\eqref{eq:ref_ori_s_odd} we easily see that
$$
\eta_1 = (t, tc^{-4}, tc^4, \ldots, tc^{(-1)^{i+1}(2i-1)-1}, \ldots, tc^2, c)r.
$$
Observe that multiplying the components of $(t, tc^{-4}, tc^4, \ldots, tc^{(-1)^{i+1}(2i-1)-1}, \ldots, tc^2, c)$ in this order, we obtain
$$
	c^{-4}c^{-12}c^{-20}\cdots c^{-4(n-2)}c = c^\ell,
$$ 
where
$$
	\ell = -4(1+3+5+\cdots + (n-2)) + 1 = -4((n-1)/2)^2 + 1 = n(2-n).
$$
Therefore, $c^\ell = 1$, and it now easily follows that $\eta_1^n = 1$, proving that $|\eta_1| = n$, as claimed.

Let us now show that $\m^\pi$ is orientable. Observe that it suffices to show that $H$ is of order $2m^2n$.
Let $\zeta = \eta_2^2$ and $\xi = \eta_1^{-1}\zeta\eta_1$. 
Since $\sigma_1\sigma_2 = \sigma_2^{-1}\sigma_1^{-1}$ and since $\rho$ inverts $\sigma_1$ and $\sigma_2$, we find that 
$$
	\xi = \rho\sigma_2^{-1}\sigma_1^{-1}\sigma_2^2\sigma_1\sigma_2\rho = \rho\sigma_1\sigma_2^2\sigma_1^{-1}\rho = \sigma_1^{-1}\sigma_2^{-2}\sigma_1. 
$$
Therefore, $\langle \zeta, \xi \rangle = H_1 \cong C_m \times C_m$, where $H_1$ is as in the proof of Lemma~\ref{lemma:nonor_s_odd}.
Setting $K_2 = \langle \zeta, \xi, \eta_1 \rangle$ we thus see that $K_2$ is a group of order $m^2 n$ (recall that $H_1$ is normal in $\langle \sigma_1, \sigma_2 \rangle$). Moreover, as in the proof of Proposition~\ref{pro:nonor_s_odd}, the fact that this time $\eta_1\eta_2$ is an involution implies that
$$
	\eta_2^{-1}\eta_1\eta_2 = \eta_2^{-1}\eta_2^{-1}\eta_1^{-1} = \zeta^{-1}\eta_1^{-1} \in K_2,
$$
showing that $K_2$ is normal in $H$. Since $\eta_2^2 \in K_2$, this implies that $K_2$ is of index $2$ in $H$, and consequently $|H| = 2m^2n$, as claimed.
\end{proof}



\begin{figure}
    \centering
    \newrgbcolor{ududff}{0.30196078431372547 0.30196078431372547 1.}
\newrgbcolor{qqwuqq}{0. 0.39215686274509803 0.}
\psset{xunit=1.5cm,yunit=1.5cm,algebraic=true,dimen=middle,dotstyle=o,dotsize=5pt 0,linewidth=1.6pt,arrowsize=3pt 2,arrowinset=0.25}
\begin{pspicture*}(2.583761845572507,-5.559524410791097)(13.470226437165865,-2.3080360925054935)
\psline[linewidth=1.pt](4.,-3.)(5.,-3.)
\psline[linewidth=1.pt](5.,-3.)(6.,-3.)
\psline[linewidth=2.pt,linecolor=qqwuqq](6.,-3.)(7.,-3.)
\psline[linewidth=1.pt](7.,-3.)(8.,-3.)
\psline[linewidth=1.pt](8.,-3.)(9.,-3.)
\psline[linewidth=2.pt,linecolor=qqwuqq](4.,-3.)(3.4813532062736874,-3.0030743857047866)
\psline[linewidth=1.pt](4.,-4.)(5.,-4.)
\psline[linewidth=1.pt](4.,-4.)(3.4813532062736874,-4.003074385704787)
\psline[linewidth=1.pt](5.,-4.)(6.,-4.)
\psline[linewidth=1.pt](6.,-4.)(7.,-4.)
\psline[linewidth=1.pt](7.,-4.)(8.,-4.)
\psline[linewidth=1.pt](8.,-4.)(9.,-4.)
\psline[linewidth=1.pt](4.,-5.)(3.4813532062736874,-5.003074385704787)
\psline[linewidth=1.pt](4.,-5.)(5.,-5.)
\psline[linewidth=1.pt](5.,-5.)(6.,-5.)
\psline[linewidth=1.pt](6.,-5.)(7.,-5.)
\psline[linewidth=1.pt](7.,-5.)(8.,-5.)
\psline[linewidth=2.pt,linecolor=qqwuqq](8.,-5.)(9.,-5.)
\psline[linewidth=1.pt](4.,-3.)(3.5047403023605423,-3.4591227593984533)
\psline[linewidth=1.pt](4.,-4.)(3.4930467543171146,-3.657913076136718)
\psline[linewidth=1.pt](4.,-3.)(3.457966110186833,-3.9268646811355468)
\psline[linewidth=1.pt](4.,-4.)(3.5047403023605423,-4.459122759398453)
\psline[linewidth=1.pt](4.,-5.)(3.4930467543171146,-4.657913076136718)
\psline[linewidth=1.pt](4.,-3.)(5.,-4.)
\psline[linewidth=1.pt](5.,-4.)(6.,-3.)
\psline[linewidth=1.pt](6.,-3.)(7.,-4.)
\psline[linewidth=1.pt](7.,-4.)(8.,-3.)
\psline[linewidth=1.pt](8.,-3.)(9.,-4.)
\psline[linewidth=1.pt](9.,-4.)(8.,-5.)
\psline[linewidth=1.pt](8.,-5.)(7.,-4.)
\psline[linewidth=1.pt](7.,-4.)(6.,-5.)
\psline[linewidth=1.pt](6.,-5.)(5.,-4.)
\psline[linewidth=1.pt](5.,-4.)(4.,-5.)
\psline[linewidth=1.pt](4.,-5.)(5.,-3.)
\psline[linewidth=1.pt](5.,-3.)(6.,-5.)
\psline[linewidth=1.pt](6.,-5.)(7.,-3.)
\psline[linewidth=2.pt,linecolor=qqwuqq](7.,-3.)(8.,-5.)
\psline[linewidth=1.pt](8.,-5.)(9.,-3.)
\psline[linewidth=1.pt](9.,-3.)(8.,-4.)
\psline[linewidth=1.pt](8.,-4.)(7.,-3.)
\psline[linewidth=1.pt](7.,-3.)(6.,-4.)
\psline[linewidth=1.pt](6.,-4.)(5.,-3.)
\psline[linewidth=1.pt](5.,-3.)(4.,-4.)
\psline[linewidth=1.pt](4.,-4.)(5.,-5.)
\psline[linewidth=1.pt](5.,-5.)(6.,-4.)
\psline[linewidth=1.pt](6.,-4.)(7.,-5.)
\psline[linewidth=1.pt](7.,-5.)(8.,-4.)
\psline[linewidth=1.pt](8.,-4.)(9.,-5.)
\psline[linewidth=1.pt](9.,-5.)(8.,-3.)
\psline[linewidth=1.pt](8.,-3.)(7.,-5.)
\psline[linewidth=1.pt](7.,-5.)(6.,-3.)
\psline[linewidth=2.pt,linecolor=qqwuqq](6.,-3.)(5.,-5.)
\psline[linewidth=2.pt,linecolor=qqwuqq](5.,-5.)(4.,-3.)
\rput[tl](5.787079966550198,-2.62114237500707){(1,1)}
\rput[tl](5.787079966550198,-4.198716336841937){(1,2)}
\rput[tl](5.8232076145311495,-5.186205381654601){(1,3)}
\rput[tl](6.786611560689854,-2.6090998256800866){(2,1)}
\psline[linewidth=1.pt](4.,-5.)(3.457966110186833,-4.073135318864454)
\psline[linewidth=1.pt](9.,-3.)(10.,-3.)
\psline[linewidth=1.pt](10.,-3.)(11.,-3.)
\psline[linewidth=2.pt,linecolor=qqwuqq](11.,-3.)(12.,-3.)
\psline[linewidth=1.pt](9.,-3.)(10.,-4.)
\psline[linewidth=1.pt](10.,-3.)(11.,-4.)
\psline[linewidth=1.pt](11.,-3.)(12.,-4.)
\psline[linewidth=1.pt](10.,-3.)(9.,-4.)
\psline[linewidth=1.pt](11.,-3.)(10.,-4.)
\psline[linewidth=1.pt](12.,-3.)(11.,-4.)
\psline[linewidth=1.pt](10.,-5.)(9.,-3.)
\psline[linewidth=1.pt](11.,-5.)(10.,-3.)
\psline[linewidth=1.pt](12.,-5.)(11.,-3.)
\psline[linewidth=1.pt](9.,-5.)(10.,-3.)
\psline[linewidth=2.pt,linecolor=qqwuqq](10.,-5.)(11.,-3.)
\psline[linewidth=1.pt](11.,-5.)(12.,-3.)
\psline[linewidth=1.pt](9.,-4.)(10.,-4.)
\psline[linewidth=1.pt](10.,-4.)(11.,-4.)
\psline[linewidth=1.pt](11.,-4.)(12.,-4.)
\psline[linewidth=2.pt,linecolor=qqwuqq](9.,-5.)(10.,-5.)
\psline[linewidth=1.pt](10.,-5.)(11.,-5.)
\psline[linewidth=1.pt](11.,-5.)(12.,-5.)
\psline[linewidth=1.pt](9.,-5.)(10.,-3.)
\psline[linewidth=1.pt](9.,-4.)(10.,-5.)
\psline[linewidth=1.pt](10.,-4.)(11.,-5.)
\psline[linewidth=1.pt](11.,-4.)(12.,-5.)
\psline[linewidth=1.pt](10.,-4.)(9.,-5.)
\psline[linewidth=1.pt](11.,-4.)(10.,-5.)
\psline[linewidth=1.pt](12.,-4.)(11.,-5.)
\psline[linewidth=2.pt,linecolor=qqwuqq](12.,-3.)(12.518646793726312,-3.003074385704786)
\psline[linewidth=1.pt](12.,-3.)(12.495259697639458,-3.459122759398453)
\psline[linewidth=1.pt](12.,-3.)(12.542033889813169,-3.9268646811355463)
\psline[linewidth=1.pt](12.,-4.)(12.506953245682887,-3.6579130761367176)
\psline[linewidth=1.pt](12.,-4.)(12.518646793726314,-4.003074385704786)
\psline[linewidth=1.pt](12.,-5.)(12.542033889813169,-4.073135318864453)
\psline[linewidth=1.pt](12.,-5.)(12.506953245682887,-4.657913076136717)
\psline[linewidth=1.pt](12.,-5.)(12.518646793726314,-5.003074385704786)
\begin{scriptsize}
\psdots[dotstyle=*](4.,-3.)
\psdots[dotstyle=*](5.,-3.)
\psdots[dotstyle=*](6.,-3.)
\psdots[dotstyle=*](7.,-3.)
\psdots[dotstyle=*](8.,-3.)
\psdots[dotstyle=*](9.,-3.)
\psdots[dotstyle=*](4.,-4.)
\psdots[dotstyle=*](4.,-4.)
\psdots[dotstyle=*](5.,-4.)
\psdots[dotstyle=*](4.,-4.)
\psdots[dotstyle=*](5.,-4.)
\psdots[dotstyle=*](6.,-4.)
\psdots[dotstyle=*](6.,-4.)
\psdots[dotstyle=*](7.,-4.)
\psdots[dotstyle=*](7.,-3.)
\psdots[dotstyle=*](8.,-3.)
\psdots[dotstyle=*](7.,-3.)
\psdots[dotstyle=*](8.,-3.)
\psdots[dotstyle=*](7.,-4.)
\psdots[dotstyle=*](8.,-4.)
\psdots[dotstyle=*](8.,-4.)
\psdots[dotstyle=*](9.,-4.)
\psdots[dotstyle=*](9.,-4.)
\psdots[dotstyle=*](4.,-5.)
\psdots[dotstyle=*](4.,-5.)
\psdots[dotstyle=*](5.,-5.)
\psdots[dotstyle=*](5.,-5.)
\psdots[dotstyle=*](6.,-5.)
\psdots[dotstyle=*](6.,-5.)
\psdots[dotstyle=*](7.,-5.)
\psdots[dotstyle=*](7.,-5.)
\psdots[dotstyle=*](8.,-5.)
\psdots[dotstyle=*](8.,-5.)
\psdots[dotstyle=*](9.,-5.)
\psdots[dotstyle=*](9.,-5.)
\psdots[dotstyle=*](4.,-4.)
\psdots[dotstyle=*](4.,-4.)
\psdots[dotstyle=*](4.,-4.)
\psdots[dotstyle=*](4.,-4.)
\psdots[dotstyle=*](4.,-5.)
\psdots[dotstyle=*](9.,-3.)
\psdots[dotstyle=*](9.,-3.)
\psdots[dotstyle=*](9.,-4.)
\psdots[dotstyle=*](9.,-4.)
\psdots[dotstyle=*](9.,-5.)
\psdots[dotstyle=*](9.,-4.)
\psdots[dotstyle=*](4.,-5.)
\psdots[dotstyle=*](10.,-3.)
\psdots[dotstyle=*](10.,-4.)
\psdots[dotstyle=*](10.,-5.)
\psdots[dotstyle=*](11.,-3.)
\psdots[dotstyle=*](11.,-4.)
\psdots[dotstyle=*](11.,-5.)
\psdots[dotstyle=*](12.,-3.)
\psdots[dotstyle=*](12.,-4.)
\psdots[dotstyle=*](12.,-5.)
\psdots[dotstyle=*](9.,-3.)
\psdots[dotstyle=*](10.,-3.)
\psdots[dotstyle=*](10.,-3.)
\psdots[dotstyle=*](11.,-3.)
\psdots[dotstyle=*](11.,-3.)
\psdots[dotstyle=*](12.,-3.)
\psdots[dotstyle=*](9.,-3.)
\psdots[dotstyle=*](10.,-4.)
\psdots[dotstyle=*](10.,-3.)
\psdots[dotstyle=*](11.,-4.)
\psdots[dotstyle=*](11.,-3.)
\psdots[dotstyle=*](12.,-4.)
\psdots[dotstyle=*](10.,-3.)
\psdots[dotstyle=*](9.,-4.)
\psdots[dotstyle=*](11.,-3.)
\psdots[dotstyle=*](10.,-4.)
\psdots[dotstyle=*](12.,-3.)
\psdots[dotstyle=*](11.,-4.)
\psdots[dotstyle=*](10.,-5.)
\psdots[dotstyle=*](9.,-3.)
\psdots[dotstyle=*](11.,-5.)
\psdots[dotstyle=*](10.,-3.)
\psdots[dotstyle=*](12.,-5.)
\psdots[dotstyle=*](11.,-3.)
\psdots[dotstyle=*](9.,-5.)
\psdots[dotstyle=*](10.,-3.)
\psdots[dotstyle=*](10.,-5.)
\psdots[dotstyle=*](11.,-3.)
\psdots[dotstyle=*](11.,-5.)
\psdots[dotstyle=*](12.,-3.)
\psdots[dotstyle=*](9.,-4.)
\psdots[dotstyle=*](10.,-4.)
\psdots[dotstyle=*](10.,-4.)
\psdots[dotstyle=*](11.,-4.)
\psdots[dotstyle=*](11.,-4.)
\psdots[dotstyle=*](12.,-4.)
\psdots[dotstyle=*](9.,-5.)
\psdots[dotstyle=*](10.,-5.)
\psdots[dotstyle=*](10.,-5.)
\psdots[dotstyle=*](11.,-5.)
\psdots[dotstyle=*](11.,-5.)
\psdots[dotstyle=*](12.,-5.)
\psdots[dotstyle=*](9.,-5.)
\psdots[dotstyle=*](10.,-3.)
\psdots[dotstyle=*](9.,-4.)
\psdots[dotstyle=*](10.,-5.)
\psdots[dotstyle=*](10.,-4.)
\psdots[dotstyle=*](11.,-5.)
\psdots[dotstyle=*](11.,-4.)
\psdots[dotstyle=*](12.,-5.)
\psdots[dotstyle=*](10.,-4.)
\psdots[dotstyle=*](9.,-5.)
\psdots[dotstyle=*](11.,-4.)
\psdots[dotstyle=*](10.,-5.)
\psdots[dotstyle=*](12.,-4.)
\psdots[dotstyle=*](11.,-5.)
\psdots[dotstyle=*](12.,-3.)
\psdots[dotstyle=*](12.,-3.)
\psdots[dotstyle=*](12.,-3.)
\psdots[dotstyle=*](12.,-4.)
\psdots[dotstyle=*](12.,-4.)
\psdots[dotstyle=*](12.,-5.)
\psdots[dotstyle=*](12.,-5.)
\psdots[dotstyle=*](12.,-5.)
\end{scriptsize}
\end{pspicture*}{}
    \caption{The graph $C_9[3K_1]$ and the base face of the reflexible orientable map constructed in Proposition~\ref{pro:reflex_or_s_odd}}
    \label{fig:oreintable-sodd-face}
\end{figure}
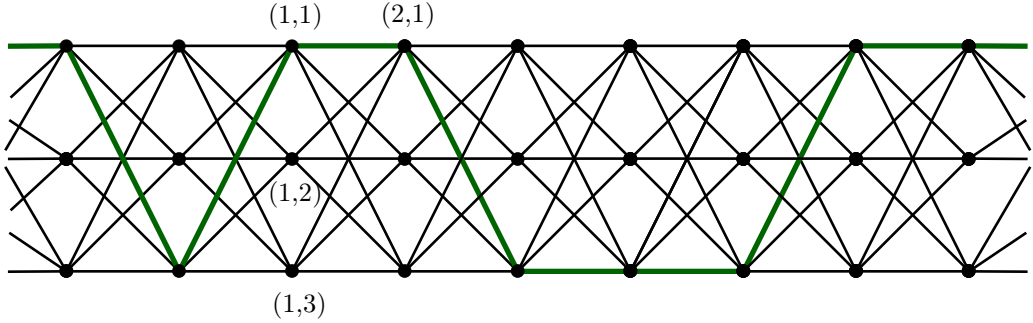

\subsection{The examples with $s$ even}

Throughout this subsection let $s$ be an even number not divisible by $4$. It turns out that this time the orientable and non-orientable reflexible maps that we  construct are not the Petrie duals of one another (in fact, the non-orientable one is self-Petrie). 
Thus, we construct each of them in a rather different way.

We start with the construction for the non-orientable maps by defining
\begin{eqnarray}
\label{eq:ref_nor_s_even}
	\sigma_1 &=& (1,1,\ldots , 1, tc^{-1})r,\nonumber \\ 
 \sigma_2 &=& (t,c^{-1},c,c,c^{-1},c^{-1},\ldots , c, c, c^{-1}, c^{-1})z, \ \text{and} \\
 G&=&\langle \sigma_1, \sigma_2 \rangle,
 \nonumber
\end{eqnarray}
where for $3 \leq i \leq n$ the $i$-th component of the ``$n$-tuple'' for $\sigma_2$ is $c$ or $c^{-1}$, depending on whether $i$ is congruent to one of $0$ and $3$ modulo $4$, or one of $1$ and $2$ modulo $4$, respectively. Note that the $\sigma_1$ from~\eqref{eq:ref_nor_s_even} has the exact same form as the $\sigma_1$ defined in~\eqref{eq:ref_nor_s_odd}.

\begin{lemma}
    \label{lemma:reflex_nonor_s_even}
Let 
$\sigma_1$, $\sigma_2$ and $G$ be as in~\eqref{eq:ref_nor_s_even}.
Then the following holds:
\begin{equation*}
|\sigma_1| = 2n, \hspace{1cm} |\sigma_2| = 2m,\hspace{1cm} |\sigma_1\sigma_2| = 2 \hspace{1cm}\text{and} \hspace{1cm} |G| = 4m^2 n.
\end{equation*}
\end{lemma}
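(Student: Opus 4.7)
The plan is to follow the same structure as the proof of Lemma~\ref{lemma:nonor_s_odd}, adapting the calculations to the present $\sigma_2$. Since $\sigma_1$ coincides with the $\sigma_1$ of~\eqref{eq:ref_nor_s_odd}, equation~\eqref{eq:nonor_s1} still gives $\sigma_1^n = (tc^{-1},\ldots,tc^{-1})1$, and $(tc^{-1})^2 = (tc^{-1}t)c^{-1} = cc^{-1} = 1$ (using $tct=c^{-1}$) yields $|\sigma_1|=2n$. For $\sigma_2^2$, apply~\eqref{eq:rz} to obtain a diagonal element whose $i$-th entry is $\beta_i\beta_{n-i+2}$, where $\beta_1 = t$ and $\beta_i = c^{\pm 1}$ for $i\ge 2$ with sign determined by $i\bmod 4$. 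Since $s\equiv 2\pmod 4$ and $m$ is odd, $n\equiv 2\pmod 4$, so $n-i+2\equiv -i\pmod 4$; a check of the four residue classes shows that the two exponents cancel when $i$ is odd and sum to $\pm 2$ when $i$ is even. As $\gcd(2,m)=1$, this implies $|\sigma_2^2|=m$ and $|\sigma_2|=2m$. For $\sigma_1\sigma_2$, expanding as in Section~\ref{sec:chiral} gives $\sigma_1\sigma_2 = (\gamma_1,\ldots,\gamma_n)rz$ with $\gamma_i=\beta_{i+1}$ for $i<n$ and $\gamma_n = tc^{-1}\cdot t = c$; squaring via $(rz)^2=1$ reduces $(\sigma_1\sigma_2)^2=1$ to the identities $\gamma_i\gamma_{n-i+1}=1$, which are again verified on each residue class modulo~$4$.

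The order of $G$ is handled by a direct analogue of the argument used for Lemma~\ref{lemma:nonor_s_odd}. Set $\varphi = \sigma_2^2$ and $\psi = \sigma_1^{-1}\varphi\sigma_1$. A single conjugation-by-$\sigma_1$ computation (which shifts components by one position and replaces the first entry by $ct\varphi_n tc^{-1}$) shows that $\psi$ is supported exactly on the odd positions while $\varphi$ is supported on the even ones. Hence $\varphi$ and $\psi$ commute and their cyclic subgroups intersect trivially, so $H_1 := \langle \varphi,\psi\rangle \cong C_m\times C_m$ has order $m^2$. The same computation applied to $\psi$ gives $\sigma_1^{-1}\psi\sigma_1 = \varphi^{-1}$, so $\sigma_1$ normalises $H_1$; and using $\sigma_2\sigma_1\sigma_2 = \sigma_1^{-1}$ (which follows from $(\sigma_1\sigma_2)^2=1$) together with $[\sigma_2,\varphi]=1$, one obtains
\[
\sigma_2^{-1}\psi\sigma_2 \;=\; \sigma_2^{-1}\sigma_1^{-1}\sigma_2^{2}\sigma_1\sigma_2 \;=\; \sigma_1\varphi\sigma_1^{-1} \;=\; \psi^{-1},
\]
so $\sigma_2$ also normalises $H_1$. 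Finally, let $K_1 := \langle \sigma_1\rangle H_1$. The intersection $\langle\sigma_1\rangle\cap H_1$ is trivial, because any element of $\langle\sigma_1\rangle$ with trivial $D_n$-part is a power of $\sigma_1^n$, which has $tc^{-1}$ (not a power of $c$) in its first component, whereas every element of $H_1$ has a power of $c$ in each component. Thus $|K_1|=2m^2 n$, and $\sigma_2^{-1}\sigma_1\sigma_2 = \varphi^{-1}\sigma_1^{-1}\in K_1$ together with the normalisation of $H_1$ by $\sigma_2$ gives $K_1\trianglelefteq G$. Since $\sigma_2\notin K_1$ (the $D_n$-part of every element of $K_1$ is a power of $r$, while that of $\sigma_2$ is $z$) and $\sigma_2^2=\varphi\in K_1$, we conclude $[G:K_1]=2$ and $|G|=4m^2 n$.

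The main obstacle is the careful bookkeeping of the alternating $c^{\pm 1}$ pattern of $\sigma_2$ and its interaction with the reflection $z$ and the shift $r$. All the component computations are case analyses on $i\bmod 4$, and they rely crucially on $n\equiv 2\pmod 4$ (i.e.\ $s\not\equiv 0\pmod 4$): without this congruence, the cancellations yielding the ``odd vs.\ even position'' support structure of $\varphi$ and $\psi$ — and hence the clean $C_m\times C_m$ structure of $H_1$ — would fail.
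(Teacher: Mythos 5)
Your proof is correct and follows essentially the same route as the paper: the same $\varphi=\sigma_2^2$, $\psi=\sigma_1^{-1}\varphi\sigma_1$, the subgroup $H_1\cong C_m\times C_m$, the identities $\sigma_1^{-1}\psi\sigma_1=\varphi^{-1}$ and $\sigma_2^{-1}\psi\sigma_2=\sigma_1\varphi\sigma_1^{-1}$, and the index-two normal subgroup $K_1=\langle\sigma_1\rangle H_1$ yielding $|G|=4m^2n$. The only differences are presentational (residue-class bookkeeping modulo $4$ instead of writing out the tuples, and making the triviality of $\langle\sigma_1\rangle\cap H_1$ and $\sigma_2\notin K_1$ explicit), so nothing further is needed.
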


\begin{proof}
As in the proof of Lemma~\ref{lemma:nonor_s_odd}, we can deduce that $|\sigma_1| = 2n$. Set $\varphi = \sigma_2^2$. Since $n$ is twice an odd number it is easy to see that 
\begin{equation}
\label{eq:varphi_nonor_s_even}
\varphi = (1,c^{-2},1,c^2,1,\ldots , c^2, 1, c^{-2})1,
\end{equation}
and so the fact that $m$ is odd implies that $|\sigma_2| = 2m$, as claimed. We also see that
$$
	\sigma_1\sigma_2 = (1,1,\ldots , 1, tc^{-1})(c^{-1},c,c,\ldots , c^{-1}, c^{-1},t)rz = (c^{-1},c,c,\ldots , c^{-1}, c^{-1},c)rz,
$$
where the $i$-th component of the $n$-tuple is $c^{-1}$ whenever $i$ is congruent to $0$ or $1$ modulo $4$, and is $c$ otherwise. It is now easy to verify that $\sigma_1\sigma_2$ is an involution. As in the proof of Lemma~\ref{lemma:nonor_s_odd} set $\psi = \sigma_1^{-1} \varphi \sigma_1$. A straightforward calculation shows that
$$
	\psi = (c^2,1,c^{-2},1,\ldots , c^{-2},1,c^2,1)1,
$$
where the $i$-th components of the $n$-tuple are all equal to $1$ for $i$ even, while for $i$ odd they alternate between $c^2$ and $c^{-2}$. In view of~\eqref{eq:varphi_nonor_s_even} it now follows that $H_1 = \langle \varphi, \psi\rangle \cong C_m \times C_m$. Clearly, $\sigma_1^{-1}\varphi\sigma_1, \sigma_2^{-1}\varphi\sigma_2 \in H_1$. Moreover, 
$$
	\sigma_1^{-1}\psi \sigma_1 = (tc^{-1},1,\ldots , 1)(1,c^2,1,c^{-2},\ldots , 1, c^2)(tc^{-1},1,\ldots , 1)1,
$$
and so in fact $\sigma_1^{-1}\psi \sigma_1 = \varphi^{-1}$. Repeating the computation from~\eqref{eq:sig2psi} we also see that $\sigma_2^{-1}\psi \sigma_2 \in H_1$, showing that $H_1$ is normal in $G$. It is clear that $H_1 \cap \langle \sigma_1 \rangle = 1$, and so $K_1 = \langle \sigma_1, \varphi, \psi\rangle$ is a group of order $2m^2 n$. Just like in~\eqref{eq:sig2sig1_conj} we see that $\sigma_2^{-1}\sigma_1\sigma_2 \in K_1$, proving that $K_1$ is normal in $G$. Finally, since $\sigma_2 \notin K_1$ and $\sigma_2^2 \in K_1$, we see that $G$ is of order $4m^2n$, as claimed. 
\end{proof}

\begin{proposition}
\label{pro:reflex_nonor_s_even}
     Let $m\geq 3$ be an odd integer and let $n = sm$, with $s$ an even integer not divisible by $4$.
     Let $\sigma_1$, $\sigma_2$ and $G$ be as in~\eqref{eq:ref_nor_s_even}. Then $\G = C_n[mK_1]$ is the skeleton of a polytopal non-orientable reflexible map $\m$ of type $\{2n, 2m\}$ with $\Aut(\m) = \Aut^+(\m) = G$. 
\end{proposition}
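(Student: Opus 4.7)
The plan is to apply part~(b) of Theorem~\ref{prop:undergraph} with base vertex $v:=(1,1)$ and adjacent vertex $w:=(2,1)$. The three conditions (i)--(iii) of that theorem for $\sigma_1,\sigma_2$ are quick to verify: $\sigma_2$ fixes $v$ because its first component $t$ fixes $1$; $\sigma_1$ maps $v$ to $(2,1)=w$; and $(\sigma_1\sigma_2)^2=1$ is part of Lemma~\ref{lemma:reflex_nonor_s_even}. For transitivity of $\langle\sigma_2\rangle$ on $N(v)$, I would trace the orbit of $(2,1)$, noting that each application of $\sigma_2$ swaps the first coordinate between $2$ and $n$ while shifting the second coordinate by $-1$ (using that the second and $n$-th components of $\sigma_2$ are both $c^{-1}$, since $n\equiv 2\pmod 4$); as $m$ is odd, this gives a single orbit of size $2m$. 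Theorem~\ref{prop:undergraph} then yields arc-transitivity, and combining $|G|=4m^2n$ with the fact that $\G$ has $2m^2n$ arcs gives $|\Stab_G(vw)|=2$.

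The crucial step is to exhibit an involutory $\rho\in G$ fixing $v$ and inverting $\sigma_1$. Let $\varphi=\sigma_2^2$ and $\psi=\sigma_1^{-1}\varphi\sigma_1$, as in the proof of Lemma~\ref{lemma:reflex_nonor_s_even}. Since $m$ is odd, the congruences $2a\equiv 1$ and $2b\equiv -1\pmod m$ have solutions $a=(m+1)/2$ and $b=(m-1)/2$. I propose
\[
\rho \;:=\; \varphi^{a}\psi^{b}\,\sigma_1^{n}\,\sigma_2,
\]
which lies in $G$ by construction. A componentwise computation using $\sigma_1^n=(tc^{-1},\ldots,tc^{-1})\cdot 1$, the explicit tuples for $\varphi$ (supported on even positions) and $\psi$ (supported on odd positions) from the lemma, the four residue classes of positions modulo $4$ in $\sigma_2$, and the relation $tc=c^{-1}t$ in $S_m$, should yield the closed form
\[
\rho \;=\; (1,\,ct,\,ct,\,\ldots,\,ct)\,z.
\]
The exponents $a,b$ are chosen precisely to make the first component collapse to $1$ and every subsequent component collapse to $ct$.

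Once $\rho$ is in this form, the remaining properties fall out by short direct verifications. Clearly $v\rho=v$. Since $ct$ has order $2$ in $S_m$ (it is a reflection in the dihedral subgroup $\langle c,t\rangle\le S_m$) and since the $z$-induced reversal fixes any tuple that is constant on positions $2,\ldots,n$, we obtain $\rho^2=1$. Using the relations~\eqref{eq:rz} one further computes $\rho\sigma_1=(1,1,ct,\ldots,ct)\cdot zr$; this tuple is fixed by conjugation by $zr$ (a cyclic shift followed by the induced reversal returns the same tuple), and $(zr)^2=1$ in $D_n$, so $(\rho\sigma_1)^2=1$, i.e.\ $\rho$ inverts $\sigma_1$. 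Theorem~\ref{prop:undergraph}(b) then delivers the polytopal non-orientable reflexible map $\m$ of type $\{2n,2m\}$ with $\Aut(\m)=\Aut^+(\m)=G$.

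The main obstacle is the componentwise calculation of the closed form for $\rho$: one must simultaneously track the residue of the position modulo $4$ (determining the $c^{\pm 1}$ entry of $\sigma_2$), the disjoint supports of $\varphi$ and $\psi$, and the left-multiplication by $tc^{-1}$ from $\sigma_1^n$, collapsing the resulting power of $c$ using the exponents $a,b$. All four residue cases reduce, modulo $m$, to the two independent congruences $2a\equiv 1$ and $2b\equiv -1$ that were used to choose $a$ and $b$ in the first place.
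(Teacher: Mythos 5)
Your proposal is correct and follows essentially the same route as the paper: verify the hypotheses of Theorem~\ref{prop:undergraph}, use Lemma~\ref{lemma:reflex_nonor_s_even} to get $|\Stab_G(vw)|=2$, and exhibit an involutory reflector in $G$. Your $\rho=\varphi^{(m+1)/2}\psi^{(m-1)/2}\sigma_1^n\sigma_2$ is in fact the very same group element as the paper's $\sigma_1^n\varphi^{(m-1)/2}\psi^{(m+1)/2}\sigma_2$ (moving $\sigma_1^n$ past the $c$-powers inverts them, which your swapped exponents compensate), and your closed form $(1,ct,\ldots,ct)z$ coincides with the paper's $(1,tc^{-1},\ldots,tc^{-1})z$ since $tc^{-1}=ct$.
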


\begin{proof}
In the same way as in the proof of Proposition~\ref{pro:nonor_s_odd}, we observe that $\sigma_1$ maps the vertex $v:=(1,1)$ to its neighbour $w:=(2,1)$ and that the orbit of $v$ under $\langle \sigma_1\rangle$ constitutes the cycle $f$ of length $2n$ with vertices $(1,1), (2,1), \dots, (n,1), (1,m), (2,m), \dots , (n,m)$, in that order.   
Further, note that  $v\sigma_2 = v$, and that for
every $j$ and $k$ with $1 \leq j,k \leq m$ the action of $\sigma_2$ on the neighbours $(2,j)$ and $(n,k)$ of $v$ is given by
 $(2,j)\sigma_2 = (n,j-1)$ and  $(n,k)\sigma_2 = (2,k-1)$. Since $m$ is odd, this shows that  
$\sigma_2$ cyclically permutes the neighbours $(2,1), (n,m), (2,m-1), (n,m-2), \dots, (n,2)$ of $v$. By Theorem~\ref{prop:undergraph} and Lemma~\ref{lemma:reflex_nonor_s_even} the group $G$ acts transitively on the arcs of $\Gamma$ and has twice as many elements as there are arcs of $\G$, implying that $|\mathrm{Stab}_{G}(vw)| = 2$.

To complete the proof set $\rho = \sigma_1^n\varphi^{(m-1)/2}\psi^{(m+1)/2}\sigma_2$, where $\varphi$ and $\psi$ are as in the proof of Lemma~\ref{lemma:reflex_nonor_s_even},
    and note that $\rho \in G$. It is easy to see that
$$
	\sigma_1^n\varphi^{(m-1)/2}\psi^{(m+1)/2} = (t,t,tc^{-2},tc^{-2},t,t,\ldots , tc^{-2},tc^{-2},t,t)1,
$$
from which one easily verifies that $\rho$ in fact has the same form as in~\eqref{eq:reflex_tau_s_odd}, that is
$$
	\rho = (1,tc^{-1},tc^{-1},\ldots , tc^{-1})z,
$$
which is an involution. That $\rho\sigma_1$ is also an involution can be verified just as in~\eqref{eq:tau_sigma_1}. Therefore, $\rho$ inverts $\sigma_1$.
Moreover, it is clear that $\rho$ fixes $v$. The result now follows by part (b) of Theorem~\ref{prop:undergraph}.
\end{proof}

As already remarked it turns out that the Petrie dual of the map corresponding to $\sigma_1$ and $\sigma_2$ from Proposition~\ref{pro:reflex_nonor_s_even} results in the same map. We thus need a different construction for a reflexible map on an orientable surface. To this end we set
\begin{eqnarray}
\label{eq:reflex_eta1_eta2_s_even}
	\eta_1 & = & (t,t,\ldots , t)r, \nonumber\\
	\eta_2 & = & (t,t,tc,tc,tc^2,tc^2,\ldots , tc^{-1}, tc^{-1})z, \ \text{and} \\
	H & = & \langle \eta_1, \eta_2 \rangle, \nonumber
\end{eqnarray}
where for $\eta_2$ the $(2i+1)$-th and $(2i+2)$-th component of the ``$n$-tuple'' is $tc^i$ for each $i$ with $0 \leq i < n/2$. Note that $n$ is even (since $s$ is), and thus for $i = \frac{n}{2}-1$ we see that $c^i = c^{\frac{n}{2}-1} = c^{-1}$, since $m$ divides $\frac{n}{2}$. Therefore, the last two components are indeed $tc^{-1}$.

\begin{lemma}
    \label{lemma:reflex_or_s_even}
Let $\eta_1, \eta_2$ and $H$ be as in~\eqref{eq:reflex_eta1_eta2_s_even}. Then the following holds:
\begin{equation*}
|\eta_1| = n, \hspace{1cm} |\eta_2| = 2m,\hspace{1cm} |\eta_1\eta_2| = 2 \hspace{1cm}\text{and} \hspace{1cm} |H| = 2m^2 n.
\end{equation*}
\end{lemma}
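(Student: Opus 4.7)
The plan is to mirror the structure of the proofs of Lemmas~\ref{lemma:rotary} and~\ref{lemma:reflex_nonor_s_even}. Using the commutation rules~\eqref{eq:rz} together with the identity $tct = c^{-1}$, I would first compute explicit tuple expressions for $\eta_1^k$, $\eta_2^2$ and $\eta_1\eta_2$ to obtain the first three orders; then construct an abelian normal subgroup $H_1 \triangleleft H$ isomorphic to $C_m\times C_m$, extend it by $\eta_1$ to a normal subgroup $K_2 \triangleleft H$ of order $nm^2$, and finally deduce $[H:K_2]=2$.

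For $|\eta_1|$, the key observation is that the tuple $(t,\ldots,t)$ is constant and so commutes with $r$, giving $\eta_1^k = (t^k,\ldots,t^k)r^k$. Since $s$ is even, so is $n = sm$, and $t^2 = 1$ then forces $\eta_1^n = 1$, while no smaller power can vanish as its image in $D_n$ is $r^k$. For $|\eta_2|$, I would compute $\eta_2^2$ using the commutation rule for $z$ from~\eqref{eq:rz} (which reverses tuple order) combined with $c^j t = tc^{-j}$, obtaining a tuple whose $i$-th component is an explicit power of $c$; since $c$ has order $m$ this yields $|\eta_2| = 2m$. For $(\eta_1\eta_2)^2 = 1$, I would compute $\eta_1\eta_2$ and verify that its $i$-th and $(n-i+1)$-th components are mutually inverse, using $c^{n/2} = 1$ (which holds since $s$ is even and hence $m \mid n/2$).

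The main work lies in showing $|H| = 2m^2 n$. Setting $\varphi := \eta_2^2$ and $\psi := \eta_1^{-1}\varphi\eta_1$, both elements lie in the abelian subgroup $\langle c\rangle^n$ of the base group $S_m^n$ and each has order $m$; their independence (giving $\langle\varphi,\psi\rangle \cong C_m\times C_m$ of order $m^2$) follows from comparing the first two coordinates in a relation $\varphi^a\psi^b = 1$. To show $H_1 := \langle\varphi,\psi\rangle$ is normal in $H$, I would verify that the actions on $\langle c\rangle^n$ of conjugation by $\eta_1$ (coordinatewise inversion composed with a cyclic shift) and by $\eta_2$ (coordinatewise inversion composed with the index reversal $i\mapsto n-i+2$) send each of $\varphi$ and $\psi$ back into $H_1$, a routine linear-algebra calculation on the exponent sequences. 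Once $H_1 \triangleleft H$ is established, the argument of the previous lemmas applies: since $\langle \eta_1\rangle$ intersects $S_m^n$ trivially, $K_2 := \langle \eta_1, H_1\rangle$ has order $nm^2$; the involutoriness of $\eta_1\eta_2$ gives $\eta_2^{-1}\eta_1\eta_2 = \varphi^{-1}\eta_1^{-1} \in K_2$, so $K_2 \triangleleft H$; and because $\eta_2 \notin K_2$ (its image in $\Aut(\G)/S_m^n \cong D_n$ is $z$, which does not lie in $\langle r\rangle$) while $\eta_2^2 = \varphi \in K_2$, we conclude $[H:K_2] = 2$ and $|H| = 2m^2 n$.

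The main obstacle I anticipate is verifying $\eta_2^{-1}\psi\eta_2 \in H_1$: the combined effect of coordinate inversion and index reversal produces an exponent sequence of the form $(1,2,\ldots,n-1,0)$ modulo $m$, which at first sight looks different from those of $\varphi$ and $\psi$, and matching it to $\varphi^a\psi^b$ requires solving a small linear system for $(a,b)$ and invoking $n\equiv 0\pmod m$.
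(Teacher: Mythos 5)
Your proposal is correct and follows essentially the same route as the paper: tuple computations giving $|\eta_1|=n$, $|\eta_2|=2m$, $(\eta_1\eta_2)^2=1$, then the chain $H_1=\langle\varphi,\psi\rangle\cong C_m\times C_m$, $K_2=\langle\eta_1,H_1\rangle$ of order $m^2n$, both normal, and $[H:K_2]=2$. The only local difference is at the obstacle you flag: the paper sidesteps the direct computation of $\eta_2^{-1}\psi\eta_2$ by using $(\eta_1\eta_2)^2=1$ to get $\eta_2^{-1}\psi\eta_2=\eta_1\varphi\eta_1^{-1}\in H_1$, while your exponent-sequence calculation also succeeds (the sequence $(1,2,\ldots,n-1,0)$ is affine mod $m$, and the linear system yields $\eta_2^{-1}\psi\eta_2=\varphi^{-2}\psi^{-1}$), so both verifications work.
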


\begin{proof}
That $|\eta_1| = n$ is clear (recall that $n$ is even). Next,
$$
	\eta_2^2 = (t,t,tc,tc,tc^2,tc^2,\ldots , tc^{-1},tc^{-1})(t,tc^{-1},tc^{-1},tc^{-2},tc^{-2},\ldots , tc,tc,t)1,
$$
and so setting $\zeta = \eta_2^2$ we deduce
\begin{equation}
\label{eq:zeta_s_even}
	\zeta = (1,c^{-1},c^{-2},\ldots, c^{1-i}, \ldots, c)1,
\end{equation}
which clearly shows that $|\eta_2| = 2m$. It is also easy to see that
$$
	\eta_1\eta_2 = (1,c,c,c^2,c^2,\ldots , c^{-1},c^{-1},1)rz,
$$
which is easily seen to be an involution.

Let $\xi = \eta_1^{-1}\zeta\eta_1$ and note that
\begin{equation}
\label{eq:xi_s_even}
	\xi = (t,t,\ldots , t)(c,1,c^{-1},\ldots , c^{2-i}, \ldots , c^2)(t,t,\ldots , t)1 = (c^{-1},1,c^{1},\ldots , c^{i-2}, \ldots , c^{-2})1.
\end{equation}
It follows that $H_2 = \langle \zeta, \xi\rangle \cong C_m \times C_m$. Since 
$$
	\eta_1^{-1}\xi\eta_1 = (t,t,\ldots , t)(c^{-2},c^{-1},1,\ldots , c^{i-3}, \ldots , c^{-3})(t,t,\ldots , t)1 = (c^{2},c^{1},1,\ldots , c^{3-i}, \ldots , c^{3})1,
$$
we see that $\eta_1^{-1}\xi\eta_1 = \zeta^{-1}\xi^{-2} \in H_2$, showing that $\eta_1$ normalizes $H_2$. Analogously to~\eqref{eq:sig2psi} we can verify that $\eta_2^{-1}\xi\eta_2 = \eta_1\zeta\eta_1^{-1} \in H_2$, showing that $H_2$ is normal in $H$. Since $H_2 \cap \langle \eta_1\rangle = 1$, we thus find that $K_2 = \langle \eta_1, \zeta, \xi\rangle$ is of order $m^2n$ and just as in~\eqref{eq:sig2sig1_conj} we see that $\eta_2^{-1}\eta_1\eta_2 = \zeta^{-1}\eta_1^{-1} \in K_2$, showing that $K_2$ is also normal in $H$. Finally, since clearly $\eta_2 \notin K_2$ but $\eta_2^2 \in K_2$, $H$ is a group of order $2m^2 n$, as claimed.
\end{proof}

\begin{proposition}
\label{pro:reflex_or_s_even}
Let $m \geq 3$ be an odd integer, let $s \geq 2$ be even but not divisible by $4$, and let $n = sm$. Furthermore, let $\eta_1, \eta_2$ and $H$ be as in~\eqref{eq:reflex_eta1_eta2_s_even}. Then $\G = C_n[mK_1]$ is the skeleton of a polytopal orientable reflexible map $\m$ of type $\{n, 2m\}$ with $\Aut^+(\m)=H$. 
\end{proposition}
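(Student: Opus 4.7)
The plan is to apply part (a) of Theorem~\ref{prop:undergraph} at the base vertex $v:=(1,1)$, using Lemma~\ref{lemma:reflex_or_s_even} for the numerical data, and then to exhibit an involution witnessing reflexibility.

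First, I would verify the three hypotheses of Theorem~\ref{prop:undergraph} for $\eta_1$ and $\eta_2$. Since the first component of the tuple of $\eta_2$ is $t$ and $z$ fixes the index $1$, we have $v\eta_2=v$. For transitivity of $\langle \eta_2\rangle$ on the $2m$ neighbours of $v$, the action formula yields $(2,j)\eta_2=(n,jt)$ and $(n,k)\eta_2=(2,k\cdot tc^{-1})$, where I use that the last component of the tuple of $\eta_2$ is $tc^{n/2-1}=tc^{-1}$ (since $m\mid n/2$, as $s$ is even). Iterating these two formulas and using the oddness of $m$ shows that the orbit of $(2,1)$ under $\langle \eta_2\rangle$ visits all $2m$ neighbours of $v$. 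Hypothesis (ii) holds because $v\eta_1=(2,1)$ is a neighbour of $v$, and (iii) is part of Lemma~\ref{lemma:reflex_or_s_even}.

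By the same lemma, $|H|=2m^2n$, which equals the number of arcs of $\Gamma$. Combined with the arc-transitivity of $H$ guaranteed by Theorem~\ref{prop:undergraph}, this forces $|\mathrm{Stab}_H(vw)|=1$ for $w:=v\eta_1$, placing us in case (a) of that theorem. Hence $\Gamma$ is the skeleton of a polytopal orientable rotary map $\m$ of type $\{n,2m\}$ with $\Aut^+(\m)=H$.

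The key step is to exhibit an involutory automorphism $\rho$ of $\Gamma$ that fixes $v$, normalizes $H$, and inverts $\eta_1$. My candidate is
\[
\rho := (1,1,\ldots,1)z,
\]
which is plainly an involution fixing $v$. Using~\eqref{eq:rz} and the $z$-invariance of the constant tuple, one verifies $\rho\eta_1\rho^{-1}=(t,\ldots,t)\,zrz=(t,\ldots,t)r^{-1}=\eta_1^{-1}$. The main obstacle is showing that $\rho$ normalizes $H$; I will in fact show that $\rho$ also inverts $\eta_2$. Writing $\eta_2=(\beta_1,\ldots,\beta_n)z$ with $\beta_{2i+1}=\beta_{2i+2}=tc^i$, a short computation via~\eqref{eq:rz} gives $\rho\eta_2\rho^{-1}=(\beta_1,\beta_n,\beta_{n-1},\ldots,\beta_2)z$, while $\eta_2^{-1}=(\beta_1^{-1},\beta_n^{-1},\ldots,\beta_2^{-1})z$. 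These two expressions coincide because each component $\beta_i=tc^j$ of $\eta_2$ is an involution in $S_m$: indeed $(tc^j)^2=t(c^jt)c^j=t(tc^{-j})c^j=1$. Thus $\rho$ normalizes $H$ and satisfies all the requirements of Theorem~\ref{prop:undergraph}(a), which then yields that $\m$ is reflexible. Once the observation that every coordinate of $\eta_2$ is an involution is in hand, the rest is essentially mechanical; the conceptual difficulty lies in guessing the correct $\rho$ and recognizing this coordinate-wise involution property of $\eta_2$.
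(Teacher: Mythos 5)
Your proposal is correct and follows essentially the same route as the paper: verify the hypotheses of Theorem~\ref{prop:undergraph} using Lemma~\ref{lemma:reflex_or_s_even}, note that $|H|=2m^2n$ equals the number of arcs so the arc-stabiliser is trivial and case (a) applies, and then take $\rho=z$ (your $(1,\ldots,1)z$) and check that it fixes $v$ and inverts both $\eta_1$ and $\eta_2$, hence normalizes $H$ and yields reflexibility. Your explicit verification that every coordinate $tc^j$ of $\eta_2$ is an involution is just a spelled-out version of the paper's remark that $\eta_1\rho$ and $\eta_2\rho$ are involutions.
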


\begin{proof}
Note that $\eta_1$ maps $v:=(1,1)$ to its neighbour $w:=(2,1)$, while $\eta_2$ permutes the neighbours of $v$ cyclically (in the order $(2,1), (n,1), (2,m), (n,2),(2,m-1),(n,3), \ldots , (n,m)$). Theorem~\ref{prop:undergraph} and Lemma~\ref{lemma:reflex_or_s_even} thus imply that $H$ acts regularly on the arcs of $\Gamma$. Consequently, part (a) of Theorem~\ref{prop:undergraph} in fact implies that the graph $\G$ is the skeleton of a polytopal rotary map $\m$ of type $\{n, 2m\}$ on an orientable surface with $\Aut^+(\m) = H$.

To complete the proof, set $\rho = z$ and observe that it is an involution fixing $v$. Since the automorphisms $\eta_1\rho$ and $\eta_2\rho$ are clearly both involutions, $\rho$ inverts each of $\eta_1$ and $\eta_2$ and therefore normalizes $H$. Part (a) of Theorem~\ref{prop:undergraph} thus implies that $\m$ is reflexible.  
%
\end{proof}

\section{Proof of the main theorem and concluding remarks}
\label{sec:concluding}

The proof of Theorem~\ref{the:main}  follows directly from Propositions~\ref{pro:rotary}, \ref{pro:nonor_s_odd}, \ref{pro:reflex_or_s_odd}, \ref{pro:reflex_nonor_s_even} and~\ref{pro:reflex_or_s_even}; the genus of the maps can of course be calculated from the order of the skeleton and the type of the map.

Let us conclude the paper by the following remarks.

\begin{enumerate}
\item
A recently computed census of rotary maps  \cite{mappaper} shows that there are precisely 282 simple graphs with at most 3000 edges that embed as skeletons of all three classes of rotary maps. The constructions provided in this paper cover about $60\%$ of them. A natural question that arises is whether one can classify all simple graphs of this type.
\item
Further, recall that a polytopal map is {\em polyhedral} provided that every two distinct faces intersect in a vertex, an edge or not at all (see \cite{egon}).
While all the maps constructed in this paper are polytopal, not all are polyhedral. For example, the faces of the chiral maps constructed in Section~\ref{sec:chiral},
are all hamiltonian cycles, implying that any two faces meet in all the vertices of their skeletons.
As the census \cite{mappaper} shows, there are only two graphs with at most 3000 edges that
embed as polyhedral rotary maps in all three possible ways. Both have 576 vertices, are of valence 6 and are non-bipartite.
They are denoted as PlhSk$(576,27)$ and PlhSk$(576,32)$ in the list of all graphs that embed as polyhedral rotary maps, provided at  \cite{mapcensus}.
This raises an obvious question whether one can find an infinite family of such graphs.
\end{enumerate}

\medskip\bigskip\noindent 
{\Large\bf Acknowledgements}

\medskip\smallskip
\noindent 

The first author acknowledges the support from PAPIIT-DGAPA, UNAM (grant IN-109023) and from CONACyT  (grant A1-S-21678). The second and third author acknowledge the support from the Slovenian Research and Innovation Agency (research programmes P1-0294 and P1-0285, respectively, and research projects J1-4351 and J1-50000, respectively).

\end{document}